\newcommand{\be}{\begin{equation}}
\newcommand{\ee}{\end{equation}}
\newcommand{\beq}{\begin{eqnarray}}
\newcommand{\eeq}{\end{eqnarray}}
\def\H{{\Bbb H}}
\def\R{{\mathfrak R}}
\def\p{\partial}
\def\S{\Sigma}
\def\<{\langle}
\def\>{\rangle}
\newtheorem{prop}{Proposition}[section]
\newtheorem{theo}[prop]{Theorem}
\newtheorem{lemm}[prop]{Lemma}
\newtheorem{coro}[prop]{Corollary}
\newtheorem{RK}{Remark}
\def\begeq{\begin{equation}}
\def\endeq{\end{equation}}
\def\p{\partial}
\def\R{\Bbb R}
\def\RR{\mathbb R}\def\R{\mathbb R}
\def\tr{{\rm tr}}
\def\d{\delta}
\def\s{\sigma}
\def\l{\lambda}
\def\L{\Lambda}
\def \ds{\displaystyle}
\def\odot{\setbox0=\hbox{$\bigcirc$}\relax \mathbin {\hbox
to0pt{\raise.5pt\hbox to\wd0{\hfil $\wedge$\hfil}\hss}\box0 }}
\numberwithin{equation} {section}
\begin{document}
\author{Jie Wu}
\address{School of Mathematical Sciences, University of Science and Technology
of China Hefei 230026, P. R. China
\and
 Albert-Ludwigs-Universit\"at Freiburg,
Mathematisches Institut
Eckerstr. 1,
D-79104, Freiburg, Germany
}
\email{jie.wu@math.uni-freiburg.de}
\thanks{The first author is partly supported by SFB/TR71
``Geometric partial differential equations''  of DFG}
\author{Chao Xia}\address{Max-Planck-Institut f\"ur Mathematik in den Naturwissenschaften, Inselstr. 22, D-04103, Leipzig, Germany}\thanks{The second author is supported by funding from the European Research Council
under the European Union's Seventh Framework Programme (FP7/2007-2013) / ERC grant
agreement no. 267087.}

\email{chao.xia@mis.mpg.de}
\subjclass[2010]{Primary 53C24, Secondary 52A20, 53C40.}
\begin{abstract}{
In this paper, we first investigate several rigidity problems for hypersurfaces in the warped product manifolds with constant linear combinations of higher order mean curvatures as well as ``weighted'' mean curvatures, which extend the work \cite{Mon, Brendle,BE} considering constant mean curvature functions. Secondly,
we obtain the rigidity results for hypersurfaces in the space forms with constant linear combinations of  intrinsic Gauss-Bonnet curvatures $L_k$. To achieve this, we develop some new kind of Newton-Maclaurin type inequalities on $L_k$ which may have independent interest.
}

\end{abstract}
\keywords{constant mean curvature, rigidity, warped product manifold, Gauss-Bonnet curvature}

\title[On Rigidity of hypersurfaces with constant curvature functions]{On Rigidity of hypersurfaces with constant curvature functions in warped product manifolds}
\maketitle

\section{Introduction}

The rigidity problem of hypersurfaces with constant curvature functions has attracted much attention in the classical differential geometry. The most typical curvature functions are the \textit{extrinsic} mean curvature and the \textit{intrinsic} Gauss (scalar) curvature. In 1899, Liebmann \cite{Li} showed two rigidity results that closed surfaces with constant Gauss curvature or \textit{convex} closed surfaces with constant mean curvature in $\mathbb{R}^3$ are spheres. Later, S\"uss \cite{Su} and Hsiung \cite{Hs} proved the rigidity for \textit{convex} or \textit{star-shaped} hypersurfaces in $\mathbb{R}^n$ for all $n$. In later 1950s, the condition of convexity or star-shapedness was eventually removed by Alexandrov in a series of papers \cite{A}. Namely, he proved that closed hypersurfaces with constant mean curvature \textit{embedded} in the Euclidean space are spheres. This result is now often referred to as \textit{Alexandrov Theorem}. Also his method, based on the maximum principle for elliptic equations, is totally different with all previous ones and now referred to as  \textit{Alexandrov's reflection method}. The embeddedness condition is necessary in view of the famous counterexamples provided by Hsiang-Teng-Yu \cite{HTY} and Wente \cite{Wente}. After the work of Alexandrov, lots of extensions appeared on such rigidity topic. Montiel and Ros \cite{Ros0, Ros, MR} proved results for hypersurfaces with constant higher order mean curvatures \textit{embedded} in space forms, following the work of Reilly \cite{Reilly} who recovered Alexandrov Theorem by using an integral technique. Simultaneously, Korevaar \cite{Kor} proved the same results following the method of Alexandrov.  Later, Montiel \cite{Mon} studied the same problem in more general ambient manifolds, the warped product manifolds. His result was in fact Hsiung's type since he added the condition of star-shapedness to the corresponding hypersurfaces.
Quite recently, Brendle \cite{Brendle} removed this star-shapedness condition and hence proved Alexandrov Theorem for constant mean curvature hypersurfaces in general warped product manifolds, including the (Anti-)deSitter-Schwarzschild manifolds as a typical example. Thereafter,  Brendle and Eichmair \cite{BE} extended the result to any compact star-shaped hypersurfaces with constant higher order mean curvature, where star-shapedness is needed again.  For other generalizations, see  for instance \cite{AIR,AL, ALM, HLMG, HMZ, Mon2} and references therein.

In this paper, we first investigate several related rigidity problems for hypersurfaces with constant curvature functions \textit{embedded} in the warped product manifolds.

Let us start with the setting. Assume $(N^{n-1}(K), g_N)$ is an $(n-1)$-dimensional compact manifold with constant sectional curvature $K$.
Let $(M^n,\bar{g})$ be an $n$-dimensional ($n\geq 3$) warped product manifold $M= [0,\bar{r})\times_\l N(K)$ $(0<\bar{r}\leq \infty)$, equipped with a Riemannian metric $$\bar{g}=dr^2+\lambda(r)^2 g_N,$$ where $\l: [0,\bar{r})\to \RR$ is a smooth positive function satisfying the following conditions:

\begin{itemize}
\item[(C1)] $\lambda'(r)>0$ for all $r\in (0,\bar{r})$;
\item[(C2)] $\frac{\lambda''(r)}{\lambda(r)}+\frac{K-\lambda'(r)^2}{\lambda(r)^2}>0$ for all $r\in (0,\bar{r})$;
\item[(C3)] $\lambda''(r)\geq0$ for all $r\in (0,\bar{r})$;
\item[(C4)] $\lambda'(0)=0$, $\lambda''(0)>0$; $2\frac{\lambda''(r)}{\lambda(r)}-(n-2)\frac{K-\lambda'(r)^2}{\lambda(r)^2}$ is non-decreasing for $r\in (0,\bar{r})$.
\end{itemize}
Condition (C2) is equivalent that Ricci curvature is smallest in the radial direction and the latter part of  (C4) is equivalent that scalar curvature is non-decreasing with respect to $r$ (see (\ref{Ric}) below). As shown in \cite{Brendle}, the Schwarzschild, the (Anti-)deSitter-Schwarzschild and the Reissner-Nordstrom manifolds satisfy (C1)-(C4).

Before stating our results, let us give some notations and terminologies.  For a hypersurface $\S$ in $M$, we denote by $H_k=H_k(\l)$ the normalized $k$-th mean curvature of $\S$, i.e.,
\begin{eqnarray}
H_k(\l)=\frac{1}{\binom{n-1}{k}}\sigma_k(\l),
\end{eqnarray}
where $\l=(\l_1,\cdots,\l_{n-1})$ are the principal curvatures of $\S$ and $\sigma_k$ is the $k$-th elementary symmetric function. We say that $\S$ is $k$-convex if  $\l$ satisfies $\sigma_j(\l)\geq 0$ for any $1\leq j\leq k$.  $\S$ is called star-shaped if $\<\frac{\p}{\p r},\nu\>\geq0$ for the outward normal $\nu$ of $\S$.

Our first result is on hypersurfaces with constant curvature quotients in the warped product manifolds. This kind of rigidity in the space forms can be obtained by using Alexandrov reflection method, which
was already referred by Korevaar \cite{Kor}.  Koh \cite{Koh2} gave another proof based on the Minkowski integral formula.

\begin{theo}\label{main thm} Let $(M^n,\bar{g})$ be an $n$-dimensional ($n\geq 3$) warped product manifold  satisfying (C1) and (C2).
Let $1\leq l<k\leq n-1$ be two integers and  $\S$ be a closed, star-shaped hypersurface in $(M, \bar{g})$. If there exists some constant $c$ such that  $H_l$ is nowhere vanishing and $\frac{H_k}{H_l}\equiv c$, then $\S$ is a slice $N\times \{r\}$ for some $r\in (0,\bar{r})$.
\end{theo}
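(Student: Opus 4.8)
The plan is to follow Koh's Minkowski-integral approach, now transplanted to the warped product. Introduce the conformal vector field $X=\l(r)\,\p_r$, which satisfies $\bar\nabla X=\l'\,\bar g$, and write $u=\<X,\nu\>=\l\<\p_r,\nu\>$; star-shapedness gives $u\ge 0$, while (C1) gives $\l'>0$ on $(0,\bar r)$. The first step is to establish the family of Hsiung--Minkowski formulas
\begin{eqnarray}\label{mink}
\int_\S \l'\,H_{j-1}\,dA&=&\int_\S u\,H_j\,dA,\qquad 1\le j\le n-1,
\end{eqnarray}
(with $H_0=1$), which are by now standard in this setting and are obtained by integrating the surface divergence of the $(j-1)$-th Newton tensor contracted with the tangential part $X^\top$, using that $\bar\nabla X$ is pure trace (cf.\ the computations in \cite{BE}). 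Taking $j=k$ and $j=l$ in \eqref{mink} and substituting the hypothesis $H_k=cH_l$ yields the key identity
\begin{eqnarray}\label{id}
\int_\S \l'\,H_{k-1}\,dA&=&\int_\S u\,H_k\,dA=c\int_\S u\,H_l\,dA\nonumber\\
&=&c\int_\S \l'\,H_{l-1}\,dA.
\end{eqnarray}

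The second step is a pointwise Newton--Maclaurin inequality. On the G\aa rding cone $\Gamma_k$ the ratios $H_j/H_{j-1}$ are non-increasing in $j$, so for $l<k$ one has $H_kH_{l-1}\le H_{k-1}H_l$, with equality exactly at umbilic points. Before invoking this I must guarantee that $\S$ actually lies in $\Gamma_k$: at a point where $r|_\S$ attains its maximum, $\S$ is tangent from inside to a slice, so its principal curvatures dominate those of that slice, all of which equal $\l'/\l>0$; hence every $H_j>0$ there, which in particular forces $c>0$, and since $H_l$ is nowhere vanishing its sign is constant and positive. Propagating cone membership globally from this elliptic point is where the constant-quotient structure must really be used, and I expect it to be the main obstacle, since the hypotheses assume only star-shapedness and $H_l\neq0$ rather than $k$-convexity outright. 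Granting $\S\subset\Gamma_k$ with $H_l>0$, dividing $H_kH_{l-1}\le H_{k-1}H_l$ by $H_l$ and using $H_k=cH_l$ gives the pointwise bound $H_{k-1}\ge cH_{l-1}$.

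Multiplying $H_{k-1}\ge cH_{l-1}$ by $\l'\ge 0$ and integrating produces $\int_\S \l'H_{k-1}\,dA\ge c\int_\S \l'H_{l-1}\,dA$, whereas \eqref{id} asserts equality; since $\l'>0$ on the interior, the pointwise inequality must then be an equality almost everywhere, i.e.\ the Newton--Maclaurin inequality is saturated, so $\S$ is totally umbilic. The final step is the classification: a closed totally umbilic hypersurface in a warped product obeying (C1) and (C2) is a slice $N\times\{r\}$. Here (C2)---equivalently, that the radial direction minimizes the Ricci curvature---is precisely what excludes non-slice umbilic hypersurfaces and forces $r$ to be constant along $\S$ (cf.\ the umbilicity analysis in \cite{Brendle}). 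In summary, the Minkowski formulas and the Newton--Maclaurin chain are routine once ellipticity is secured, so the genuine difficulty concentrates in establishing $\S\subset\Gamma_k$ from the elliptic point, after which \eqref{id} and the saturated inequality deliver umbilicity and hence the slice.
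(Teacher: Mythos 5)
Your argument fails at its first step, and the failure is exactly the point of this theorem's setting. The clean Hsiung--Minkowski identities you invoke,
\[
\int_\S \l'\,H_{j-1}\,d\mu=\int_\S \<X,\nu\>\,H_j\,d\mu ,
\]
are \emph{false} in a general warped product satisfying only (C1)--(C2). They hold in space forms because there the Newton tensors are divergence-free, but here Codazzi plus local conformal flatness gives
\[
\sum_{i=1}^{n-1}\nabla_i (T_{j-1})_{ip}=-\frac{n-j}{n-2}\sum_{i=1}^{n-1}Ric(e_i,\nu)\,(T_{j-2})_{ip}\neq 0\quad (j\geq 2),
\]
so the correct formula (Proposition \ref{Minkowskiformula}) carries an extra term:
\[
\int_\S \<X,\nu\>H_j\,d\mu=\int_\S VH_{j-1}\,d\mu+\frac{j-1}{\binom{n-2}{j-2}}\int_\S \sum_{i,p}A_{ip}(T_{j-2})_{ip}\,d\mu ,\qquad A_{ip}=-\tfrac{1}{(n-1)(n-2)}\<X,e_i\>Ric(e_p,\nu).
\]
Only $j=1$ survives as an identity (there $T_0=\mathrm{id}$). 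Under star-shapedness, (C2) and $k$-convexity the error terms $E_j\geq 0$, but they do \emph{not} cancel in your combination: what one actually gets is
\[
0=\int_\S V(H_{k-1}-cH_{l-1})\,d\mu+E_k-cE_l ,
\]
and $-cE_l$ enters with the wrong sign, so nonnegativity of each $E_j$ is useless here; your chain of (in)equalities does not close and umbilicity does not follow. A telling symptom: apart from the final classification step, your argument never uses star-shapedness or (C2) at all. If the identities you wrote were true, the theorem would hold without those hypotheses, which is precisely what one cannot expect in this generality.

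The paper's proof is built around this difficulty: it keeps the error terms and shows the \emph{combined} error integrand is pointwise nonnegative. At a point where $h_{ij}$ is diagonal it equals $\sum_j A_{jj}\bigl((k-1)H_{k-2}(\L_j)-c(l-1)H_{l-2}(\L_j)\bigr)$, and the paper establishes the refined inequality (\ref{HH}),
\[
(k-1)H_{k-2}(\L_j)H_{l-1}-(l-1)H_{k-1}H_{l-2}(\L_j)>0 ,
\]
using $\sigma_k=\l_j\sigma_{k-1}(\L_j)+\sigma_k(\L_j)$ together with Newton--Maclaurin; combined with $c=H_k/H_l\leq H_{k-1}/H_{l-1}$ and $A_{jj}\geq 0$ this makes the whole right-hand side nonnegative. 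Equality then forces both umbilicity \emph{and} $A_{jj}\equiv 0$, and it is the latter (via strict (C2), the resulting dichotomy $\nu\parallel\p_r$ or $\nu\perp\p_r$, and the existence of a point where $\nu\parallel\p_r$) that yields the slice conclusion. Two secondary remarks. First, the cone propagation you single out as ``the main obstacle'' is routine: $H_k=cH_l>0$ everywhere, one elliptic point, and connectedness of $\S$ give $\l\in\Gamma_k^+$ on all of $\S$ by G\r{a}rding's theory, exactly as in the paper. Second, your closing appeal to Brendle's umbilicity analysis is stated there for CMC hypersurfaces; in your situation it could be patched (umbilicity plus $H_k/H_l\equiv c$ forces the umbilicity factor $\kappa$ to satisfy $\kappa^{k-l}=c$, hence to be constant), but the paper's route through $A_{jj}\equiv 0$ avoids needing any such classification.
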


Next, we study the rigidity problem for hypersurfaces with constant linear combinations of mean curvatures in the warped product manifolds.
\begin{theo}\label{mainthm0}
Let $(M^n,\bar{g})$ be an $n$-dimensional ($n\geq 3$) warped product manifold satisfying  (C1) and (C2). Let $0\leq l<k\leq n-1$ be two integers and $\Sigma$ be a closed, k-convex star-shaped hypersurface in $(M^n,\bar g)$.
If either of the following holds:
\begin{itemize}
\item[(i)] $2\leq l<k\leq n-1$ and  there are nonnegative constants $\{a_i\}_{i=1}^{l-1}$ and $\{b_j\}_{j=l}^k$, at least one of them not vanishing,  such that $$\sum_{i=1}^{l-1}a_iH_i=\sum_{j=l}^k b_j H_j;$$
\item[(ii)] $1\leq l<k\leq n-2$ and there are nonnegative constants $\{a_i\}_{i=0}^{l-1}$ and $\{b_j\}_{j=l}^k$,  at least one of them not vanishing, such that $$\sum_{i=0}^{l-1}a_iH_i=\sum_{j=l}^k b_j H_j;$$
\end{itemize}
then $\Sigma$ is a slice $N\times \{r\}$ for some $r\in (0,\bar{r})$.
\end{theo}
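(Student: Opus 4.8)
The plan is to combine the Hsiung--Minkowski formulas with the Newton--Maclaurin inequalities, exploiting the crucial gap between the indices $\{0,\dots,l-1\}$ (or $\{1,\dots,l-1\}$) and $\{l,\dots,k\}$; no Heintze--Karcher inequality is needed. Write $u=\langle \lambda\partial_r,\nu\rangle$ and $f=\lambda'(r)$ restricted to $\Sigma$, so that $u\geq 0$ by star-shapedness and $f>0$ by (C1). First I would record the Minkowski formulas for the conformal field $X=\lambda\partial_r$ (which satisfies $\bar\nabla X=f\,\bar g$), namely
\[
\int_\Sigma\big(f\,H_m-u\,H_{m+1}\big)\,dA=0,\qquad 0\leq m\leq n-2,
\]
together with the standard facts that $k$-convexity places the principal curvatures in the cone $\Gamma_k^+$, where $H_1,\dots,H_k$ are positive, the ratios $H_{m+1}/H_m$ are non-increasing in $m$, and each Newton inequality $H_{m-1}H_{m+1}\leq H_m^2$ is an equality only at umbilic points.

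For case (i) I would multiply the relation $\sum_{i=1}^{l-1}a_iH_i=\sum_{j=l}^{k}b_jH_j$ by $u$, integrate, and apply the Minkowski identity $\int_\Sigma uH_m\,dA=\int_\Sigma fH_{m-1}\,dA$ to every term; this is legitimate precisely because the top index satisfies $k\leq n-1$, and because no constant term is present the downward shift never produces an unreducible $\int_\Sigma u$. The result is the shifted identity
\[
\int_\Sigma f\Big(\sum_{j=l}^{k}b_jH_{j-1}-\sum_{i=1}^{l-1}a_iH_{i-1}\Big)\,dA=0.
\]
The heart of the argument is then a pointwise inequality: writing $g(m)=H_{m-1}/H_m$, which is non-decreasing in $m$, the quotient $\big(\sum a_iH_{i-1}\big)/\big(\sum a_iH_i\big)$ is a weighted average of $g$ over indices $\leq l-1$, while $\big(\sum b_jH_{j-1}\big)/\big(\sum b_jH_j\big)$ is a weighted average over indices $\geq l$; since every left index is strictly smaller than every right index, monotonicity of $g$ forces the first average to be $\leq$ the second. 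Clearing denominators with the hypothesis $\sum a_iH_i=\sum b_jH_j$ gives $\sum_j b_jH_{j-1}-\sum_i a_iH_{i-1}\geq 0$ pointwise; combined with the displayed identity and $f>0$, the integrand vanishes identically. This squeezes the two averages together, forcing $g(l-1)=g(l)$, i.e. $H_{l-2}H_l=H_{l-1}^2$ everywhere, so by the equality case of Newton's inequality $\Sigma$ is totally umbilic, and the classification of closed umbilic hypersurfaces in warped products satisfying (C1)--(C2) identifies $\Sigma$ with a slice.

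For case (ii) the roles of $u$ and $f$ are interchanged: I would multiply $\sum_{i=0}^{l-1}a_iH_i=\sum_{j=l}^{k}b_jH_j$ by $f$ and use $\int_\Sigma fH_m\,dA=\int_\Sigma uH_{m+1}\,dA$, which is exactly where the restriction $k\leq n-2$ is needed so that the top term $H_k$ may be raised to $H_{k+1}$; the constant term $a_0H_0$ now causes no trouble because it is raised rather than lowered. This yields $\int_\Sigma u\big(\sum_i a_iH_{i+1}-\sum_j b_jH_{j+1}\big)\,dA=0$, and the same monotone-average argument, applied to the non-increasing function $h(m)=H_{m+1}/H_m$, gives the reversed pointwise inequality $\sum_i a_iH_{i+1}-\sum_j b_jH_{j+1}\geq 0$. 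Umbilicity then follows on $\{u>0\}$ and extends to all of $\Sigma$ by continuity.

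The step I expect to be the main obstacle is not the integral identities, which are mechanical, but the correct formulation and equality analysis of this weighted Newton--Maclaurin comparison across the index gap; in particular one must verify that the relevant $H_m$ are strictly positive (so that the equality case of Newton's inequality genuinely yields umbilicity), which should follow from $k$-convexity together with the hypothesis. A secondary technical point, specific to case (ii), is that star-shapedness only provides $u\geq 0$, so one must argue that $\{u>0\}$ is dense---which holds because a closed star-shaped hypersurface is a radial graph---in order to promote the pointwise equality, and hence umbilicity, from $\{u>0\}$ to the whole of $\Sigma$.
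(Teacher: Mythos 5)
Your integral skeleton (lower the indices against $\langle X,\nu\rangle$ in case (i), raise them against $V$ in case (ii), then apply Newton--Maclaurin across the index gap) is the same as the paper's, but the identities you build everything on are false in the setting of the theorem, and this is a genuine gap rather than a technicality. The Minkowski formula $\int_\Sigma \bigl(V H_m - \langle X,\nu\rangle H_{m+1}\bigr)\,d\mu=0$ holds in space forms, where the Codazzi equation makes the Newton tensors divergence-free along $\Sigma$; in a warped product satisfying only (C1)--(C2) one has instead (Proposition \ref{Minkowskiformula})
\begin{equation*}
\int_\Sigma \langle X,\nu\rangle H_{m+1}\,d\mu=\int_\Sigma V H_{m}\,d\mu+\frac{m}{\binom{n-2}{m-1}}\int_\Sigma \sum_{i,j=1}^{n-1}A_{ij}(T_{m-1})_{ij}\,d\mu,\qquad A_{ij}=-\frac{\langle X,e_i\rangle\,Ric(e_j,\nu)}{(n-1)(n-2)},
\end{equation*}
and the correction vanishes only for $m=0$. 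So every shifted identity you use with $m\geq 1$ is missing a curvature term. Nor can these terms simply be discarded via the sign fact $A_{jj}\geq 0$ of \eqref{Aj} (which is exactly where (C2) and star-shapedness enter): in your linear combinations they occur with \emph{opposite} signs, and one must show that the matrix
$\sum_{j}\frac{(j-1)b_j}{\binom{n-2}{j-2}}(T_{j-2})_{pq}-\sum_{i}\frac{(i-1)a_i}{\binom{n-2}{i-2}}(T_{i-2})_{pq}$
is positive definite, as in \eqref{aabb5}. That step requires an additional pointwise Newton--Maclaurin-type inequality, namely \eqref{ab4}--\eqref{ab5}, which the paper derives from \eqref{HLambda} and \eqref{HH}; nothing in your proposal produces it. This is precisely the point emphasized in the paper's introduction: in warped products the extra divergence terms must be tracked carefully rather than thrown away.

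The second casualty is your endgame. Total umbilicity of a closed hypersurface does not identify it as a slice at this level of generality --- already in space forms umbilic hypersurfaces are geodesic spheres, almost never slices --- and there is no classification theorem under (C1)--(C2) alone that you can invoke as a black box. In the paper, the slice conclusion is extracted from the very terms you discarded: equality in the integrated inequality forces not only umbilicity but also $A_{jj}\equiv 0$ for every $j$ (because the Newton-tensor combination above is strictly positive definite), and then the strict inequality in (C2) gives $\langle X,e_j\rangle^2\langle X,\nu\rangle=0$, i.e.\ $\nu$ is everywhere parallel or perpendicular to $\partial_r$; since $\nu$ is parallel to $\partial_r$ at some point, connectedness makes $\Sigma$ a slice. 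Your argument, lacking the correction terms, cannot reach this conclusion, and your side remarks (density of $\{u>0\}$, positivity of the $H_m$) do not repair it. What you have written is essentially a correct proof of the space-form case, Theorem \ref{mainthm00}, where star-shapedness is indeed unnecessary; the warped-product statement genuinely needs the machinery you omitted.
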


 Theorems \ref{main thm} and \ref{mainthm0} will be proved by using the classical integral method due to Hsiung \cite{Hs} and Reilly \cite{Reilly}. The main tools are Minkowski formulae as well as a  family of Newton-Maclaurin  inequalities. Unlike in the space forms, the Newton tensor is generally not divergence-free in the warped  product manifolds.  As observed in \cite{BE}, the extra terms will have a good sign under the condition (C2) and star-shapedness.  However, to deal with our rigidity problems, one needs to keep trail with these terms carefully rather than just throw them away. On the other hand, by the generality of warped product manifolds,  the classical Alexandrov's reflection method \cite{A} as in \cite{Kor} seems to be difficult to deal with our problems.

Next we will also study  similar rigidity problems on some ``weighted'' higher mean curvatures and their linear combinations. We denote the weight in the warped product manifolds by $V(r):=\lambda'(r)$.  In \cite{W}, the first author discussed such rigidity result in $\H^n$. This kind of ``weighted'' mean curvature appears very naturally.  Interestingly, the corresponding weighted Alexandrov-Fenchel inequalities relate to the quasi-local mass in $\H^n$ and the Penrose inequalities for asymptotically hyperbolic graphs, see \cite{DGS,GWW3} for instance.   Our next result is regarding the above weighted rigidity results in the warped product manifolds.

\begin{theo}\label{mainthm2}
Let $(M^n,\bar{g})$ be an $n$-dimensional ($n\geq 3$) warped product manifold  satisfying  (C1)-(C3). Let $0\leq l<k\leq n-1$ be two integers and $\Sigma^{n-1}$ be a closed star-shaped hypersurface in $(M^n,\bar g)$.
If one of the following case holds:
\begin{itemize}
\item[(i)]  $(M,\bar g)$ satisfies (C4) and $V H_k$ is a constant for some $k=1,\cdots,n-1$;
\vspace{2mm}
\item[(ii)]$2\leq l<k\leq n-1$, $\S$ is k-convex and there are nonnegative constants  $\{a_i\}_{i=1}^{l-1}$ and $\{b_j\}_{j=l}^k$, at least one of them not vanishing,
such that $$\sum_{i=1}^{l-1}a_iH_i=\sum_{j=l}^k b_j(VH_j);$$
\item[(iii)] $1\leq l<k\leq n-2$, $\S$ is k-convex and there are nonnegative constants $\{a_i\}_{i=0}^{l-1}$ and $\{b_j\}_{j=l}^k$,  at least one of them not vanishing,
such that $$\sum_{i=0}^{l-1}a_iH_i=\sum_{j=l}^k b_j (VH_j);$$
\end{itemize}
then $\Sigma$ is a slice $N\times \{r\}$ for some $r\in (0,\bar{r})$.
\end{theo}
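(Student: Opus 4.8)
The plan is to adapt the classical Hsiung--Reilly integral method to the warped product geometry and the weight $V=\l'$. The basic object is the conformal vector field $X=\l(r)\,\p_r$, characterized by $\bar\nabla_Y X=V\,Y$ for every $Y$; writing $u=\<X,\nu\>=\l\<\p_r,\nu\>$ one has $u\ge 0$ by star-shapedness. Decomposing $X=X^T+u\nu$ along $\S$ gives $\nabla_i X^T_j=V g_{ij}-u h_{ij}$, and contracting with the Newton tensors $T_{j-1}$ of the shape operator and integrating $\mathrm{div}_\S(T_{j-1}X^T)$ over the closed hypersurface produces the Minkowski relations
\[
\int_\S\big(V H_{j-1}-u H_j\big)\,dA=-\frac{1}{c_j}\int_\S\<\mathrm{div}_\S T_{j-1},X^T\>\,dA,\qquad 1\le j\le n-1,
\]
with $c_j=j\binom{n-1}{j}$. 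The first step is to establish these together with their weighted analogues, obtained by inserting the factor $V$ and using $\bar\nabla V=\frac{\l''}{\l}X$, which feed the constant weighted curvatures $V H_j$ appearing in the hypotheses.

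The heart of the matter is the right-hand side above. Since $M$ is not a space form the Newton tensors fail to be divergence free: $\mathrm{div}_\S T_{j-1}$ is a contraction of $T_{j-2}$ against the ambient curvature, governed by the radial and fibre sectional curvatures $-\l''/\l$ and $(K-\l'^2)/\l^2$. A direct computation (already visible at $j=2$, where $\mathrm{div}_\S T_1$ reduces to $\overline{\mathrm{Ric}}(\nu,\cdot)^T$ and $\rho_r-\rho_f=-(n-2)\big(\tfrac{\l''}{\l}+\tfrac{K-\l'^2}{\l^2}\big)<0$ by (C2)) shows that $\<\mathrm{div}_\S T_{j-1},X^T\>$ has a definite sign once $u\ge 0$ (star-shapedness), $T_{j-2}\ge 0$ ($k$-convexity with $j\le k$), and (C2) hold; the weighted contributions acquire their sign from $\l''\ge 0$, which is exactly (C3). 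Thus every Minkowski relation becomes a one-sided inequality, and I expect the principal difficulty to lie precisely here: rather than discarding these corrections as in the constant-$H_k$ arguments of \cite{Brendle,BE}, I must compute them, pin down their sign at every order $j$, and carry them unchanged through the subsequent combination, because their vanishing is what will eventually force rigidity.

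With the signed Minkowski inequalities in hand, for each hypothesis I would form the linear combination of them dictated by the prescribed identity and compare the two sides termwise using the Newton--Maclaurin inequalities $H_{j-1}H_{j+1}\le H_j^2$ and the induced monotonicity of $H_j/H_{j-1}$, available because $k$-convexity makes $H_1,\dots,H_k>0$ and (C1) gives $V>0$. The nonnegativity of $\{a_i\},\{b_j\}$ guarantees that all the comparisons point the same way, so the combined integral inequality is sign-definite; the assumed equality $\sum a_iH_i=\sum b_j(VH_j)$ then forces it to vanish, which in turn forces equality in every Newton--Maclaurin inequality and the vanishing of every curvature correction. The splitting of the sum at the index $l$, and the precise index ranges, are what make the termwise comparison close: the hypothesis $l\ge 2$ in (ii) versus the admissibility of the constant term $a_0H_0$ in (iii) together with $k\le n-2$ there reflect which Minkowski inequality, up to order $k$ or $k+1$, is needed to anchor the bottom and the top of the chain. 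Case (i) is the genuinely weighted single-term situation, where there is no combination to absorb the correction; here I would invoke (C4), using the monotonicity of the scalar curvature and $\l'(0)=0$, $\l''(0)>0$ to make the relevant term sign-definite and to control the behaviour near $r=0$.

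Finally, equality throughout forces $\S$ to be totally umbilic, $\l_1=\cdots=\l_{n-1}$ at each point, while the simultaneous vanishing of the strictly signed correction $\<\mathrm{div}_\S T_{j-1},X^T\>$ (using strict inequality in (C2)) forces $\<\p_r,\nu\>\,|(\p_r)^T|=0$, i.e. $\p_r$ normal along $\S$. A standard argument then identifies $\S$ with a slice $N\times\{r\}$: the Codazzi equation together with umbilicity makes the umbilicity factor, and hence $r$, constant. This yields the conclusion in all three cases.
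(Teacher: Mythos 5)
Your treatment of cases (ii) and (iii) follows essentially the same route as the paper: the Minkowski-type identities with curvature correction terms (Propositions \ref{Minkowskiformula} and \ref{prop1}), the sign analysis of those corrections via star-shapedness and (C2) for the $A_{ij}$-terms and via $k$-convexity and (C3) for the weighted term $(T_{k-1})^{ij}X_i\nabla_j V$, then the termwise Newton--Maclaurin comparison of the two prescribed linear combinations with nonnegative coefficients, and finally the equality analysis forcing umbilicity and $\nu$ parallel to $\p_r$, hence a slice. That part of the plan is sound and matches the paper's argument.

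Case (i), however, has a genuine gap. A single weighted Minkowski relation, even after its correction terms are given a definite sign, is a one-sided inequality valid for \emph{every} star-shaped hypersurface; together with $H_{k-1}\ge H_k^{\frac{k-1}{k}}$ it yields only
\[
VH_k\int_\S\<X,\nu\>\,d\mu\;\ge\;\int_\S V^2H_{k-1}\,d\mu\;\ge\;(VH_k)^{\frac{k-1}{k}}\int_\S V^{1+\frac 1k}\,d\mu,
\]
and such a one-directional inequality can never by itself force rigidity (already in $\R^n$ the Minkowski formula $\int_\S (V-\<X,\nu\>H_1)\,d\mu=0$ holds for all closed hypersurfaces). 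What closes the argument in the paper is a second, \emph{opposing} inequality: the Heintze--Karcher type inequality of Ros--Brendle (Proposition \ref{B}), $\int_\S\<X,\nu\>\,d\mu\le\int_\S \frac{V}{H_1}\,d\mu$, with equality iff $\S$ is totally umbilical; combined with $H_1\ge H_k^{1/k}$ it gives the reverse bound $\int_\S\<X,\nu\>\,d\mu\le (VH_k)^{-\frac 1k}\int_\S V^{1+\frac 1k}\,d\mu$, and equality throughout the chain does the rest. This is exactly where (C4) enters --- as a hypothesis of Brendle's inequality, which is a substantial theorem proved by entirely different methods --- not, as you suggest, through a direct pointwise sign argument using monotonicity of the scalar curvature and the behaviour of $\l$ near $r=0$. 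As written, your case (i) is missing this essential second ingredient and the argument does not close.
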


Theorem \ref{mainthm2} is proved in a similar way by taking the consideration of a new Minkowski type formula, Proposition \ref{prop1}. We note that  the presence of the weight makes   Alexandrov's reflection method   hard to apply even in the case of space forms, see \cite{W}.

\begin{RK}

\

\begin{itemize}
\item[(1)] Comparing with the results in \cite{Brendle, BE}, in the most cases we do not assume (C4). In fact,  we mostly will not use the Heintze-Karcher type inequality derived in \cite{Brendle}, for which (C4) is essential.
\item[(2)] Theorem \ref{mainthm0} contains the simplest case that $H_1$ is constant. In view of  Brendle's result in \cite{Brendle}, for this case, if one assumes further (C4) on $M$,  the condition of $1$-convexity and star-shapedness on hypersurfaces is actually superfluous. Similarly, the condition of star-shapedness is needless in Theorem \ref{mainthm0} when we consider $VH_1$ is a constant.
\item[(3)] Theorem \ref{mainthm0} also contain  the case that higher order mean curvatures $H_k$  are constant. For this case,  the $k$-convexity condition is superfluous since it is implied by the constancy of $H_k$.
\item[(4)] For similar rigidity problem in the space forms, the star-shapedness is not necessary. See Theorem \ref{mainthm00} below.
\end{itemize}
\end{RK}

\

The second part of this paper is about rigidity problems on some intrinsic curvature functions of induced metric from that of the  space forms. In fact, this is one of our motivations to study the linear combinations of mean curvature functions. As mentioned at the beginning, Liebmann \cite{Li} showed closed surfaces with constant Gauss curvature in $\mathbb{R}^3$ are spheres. Apparently, in space forms, one can see from the Gauss formula that surfaces with constant  scalar (Gauss) curvature is equivalent to constant $2$-nd mean curvature. Hence Liebmann's result is equivalent to Ros' \cite{Ros0}.  On the other hand, there is a natural generalization of scalar curvature, called Gauss-Bonnet curvatures. The Pfaffian in Gauss-Bonnet-Chern formula is the highest order Gauss-Bonnet curvature.  The general one  appeared first in the paper of Lanczos \cite{Lan} in 1938 and has been intensively studied in the theory of Gauss-Bonnet gravity, which is a generalization of Einstein gravity. Precisely, the Gauss-Bonnet curvatures are defined by
 \begin{equation}\label{Lk}
L_k:=\frac{1}{2^k}\d^{i_1i_2\cdots i_{2k-1}i_{2k}}
_{j_1j_2\cdots j_{2k-1}j_{2k}}{R_{i_1i_2}}^{j_1j_2}\cdots
{R_{i_{2k-1}i_{2k}}}^{j_{2k-1}j_{2k}},
\end{equation}
where $\d^{i_1i_2\cdots i_{2k-1}i_{2k}}
_{j_1j_2\cdots j_{2k-1}j_{2k}}$ is the generalized Kronecker delta defined in (\ref{generaldelta}) below and ${R_{ij}}^{kl}$ is the Riemannian curvature $4$-tensor  in local coordinates.
It is easy to see that $L_1$ is just the scalar curvature $R$. When $k=2$, it is the second Gauss-Bonnet curvature
\[L_2 = \|Rm\|^2-4\|Ric\|^2+R^2.\]
 For general $k$ it is the Euler integrand in the
Gauss-Bonnet-Chern theorem  if $n=2k$ and
 is therefore called the dimensional continued Euler density
in physics if $k<n$. Here $n$ is the dimension of corresponding manifold. Using the Gauss-Bonnet curvatures one can define the Gauss-Bonnet-Chern mass and guarantee its well-defineness in asymptotically flat manifolds as well as asymptotically hyperbolic manifolds, see \cite{GWW0,GWW1,GWW3}.

In the Euclidean space $\R^n$, the intrinsic Gauss-Bonnet curvatures  $L_k$ with the induced metric on the surfaces are the same with $H_{2k}$, up to some scaling constant. In the space forms rather than $\R^n$, $L_k$ can be expressed as some linear combination of $H_k$ (see Lemma \ref{lem0} below). Explicitly, for the unit sphere $\mathbb{S}^n$, $$L_k=\binom{n-1}{2k} (2k)!\sum_{i=0}^k \binom{k}{i} H_{2k-2i}.$$  Notice here all the coefficients are positive.
Therefore as a direct consequence of Theorem \ref{mainthm00} (ii), we have the following

\begin{coro}\label{thmsphere}Let $1\leq k\leq \frac{n-1}{2}$ be an integer and $\Sigma$ be a closed $2k$-convex hypersurface embedded in the hemisphere $\mathbb{S}_+^n$. If the $k$-th Gauss-Bonnet curvature $L_k$ is constant, then $\Sigma$ is a centered geodesic hypersphere.
\end{coro}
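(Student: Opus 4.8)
The plan is to reduce the statement to the space-form rigidity theorem \ref{mainthm00}(ii) by turning the single scalar condition ``$L_k\equiv\text{const}$'' into a linear relation among the mean curvatures of the prescribed shape. The input is the explicit expansion recorded above (the hemisphere case of Lemma \ref{lem0}),
\[
L_k=\binom{n-1}{2k}(2k)!\sum_{i=0}^{k}\binom{k}{i}H_{2k-2i}.
\]
Dividing by the positive factor $\binom{n-1}{2k}(2k)!$ (which makes sense precisely because $2k\le n-1$, i.e. $k\le\frac{n-1}{2}$, the range assumed), the constancy of $L_k$ reads $\sum_{i=0}^{k}\binom{k}{i}H_{2k-2i}\equiv c'$ for a constant $c'$. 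Isolating the term $i=k$, where $H_0\equiv1$, I would rewrite this as
\[
(c'-1)\,H_0=\sum_{i=0}^{k-1}\binom{k}{i}H_{2k-2i}=\binom{k}{k-1}H_2+\binom{k}{k-2}H_4+\cdots+\binom{k}{0}H_{2k},
\]
a combination of $H_2,H_4,\dots,H_{2k}$ with strictly positive binomial coefficients, equated to a constant multiple of $H_0$.

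Next I would match this identity to the hypothesis of Theorem \ref{mainthm00}(ii). Taking $l=1$ and upper index $2k$, I set $a_0=c'-1$, $b_{2k-2i}=\binom{k}{i}$ for $0\le i\le k-1$, and let every remaining $b_j$ (all odd indices, and $j=1$) vanish. All these coefficients are nonnegative, and $b_{2k}=\binom{k}{0}=1\neq0$, so at least one is nonzero, as required. The upper index $2k\le n-1$ sits in the admissible range of the space-form theorem, and the $2k$-convexity assumed in the Corollary furnishes exactly the $2k$-convexity hypothesis of \ref{mainthm00}(ii); the embeddedness replaces star-shapedness there, since the space-form result is proved without the star-shaped condition (cf. Remark (4)).

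The one point requiring care is the sign of the coefficient $a_0=c'-1$. Here $2k$-convexity is decisive: it forces $H_2,\dots,H_{2k}\ge0$, so the right-hand side of the displayed identity is nonnegative, whence $c'\ge1$ and $a_0\ge0$. With all $\{a_i\}$ and $\{b_j\}$ nonnegative, Theorem \ref{mainthm00}(ii) applies and gives that $\Sigma$ is a slice $N\times\{r\}$. Finally, since $\mathbb{S}^n_+$ carries the warped product structure $dr^2+\sin^2 r\,g_{\mathbb{S}^{n-1}}$ centered at the pole, such a slice is precisely a geodesic hypersphere centered at the pole, which is the asserted conclusion. I expect the only genuine subtlety to be this bookkeeping — the positivity of the binomial coefficients together with the sign of $c'-1$ supplied by $2k$-convexity, and the index bound $2k\le n-1$ — after which the Corollary is immediate.
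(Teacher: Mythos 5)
Your proposal is correct and takes essentially the same route as the paper: both reduce the constancy of $L_k$, via the spherical case of Lemma \ref{lem0}, to a relation $a_0=\sum_{i=1}^{k}\binom{k}{i}H_{2i}$ with nonnegative coefficients and then invoke Theorem \ref{mainthm00}(ii). The only cosmetic difference is that the paper deduces $a_0>0$ from the existence of an elliptic point in $\mathbb{S}^n_+$, whereas you deduce $a_0\geq 0$ from the $2k$-convexity; either suffices, since $b_{2k}=1\neq 0$ guarantees at least one nonvanishing coefficient.
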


Unlike  in $\mathbb{S}^n$, the  intrinsic Gauss-Bonnet curvature $L_k$ in $\mathbb{H}^n$ is a linear combination of $H_k$ with sign-changed coefficients. Precisely,
$$L_k=\binom{n-1}{2k} (2k)!\sum_{i=0}^k \binom{k}{i} (-1)^i H_{2k-2i}.$$
Hence we cannot apply Theorem \ref{mainthm00} (ii) directly to conclude the rigidity. Moreover, we could prove the general rigidity result of hypersurfaces in terms of the constant linear combinations of $L_k$. This rigidity of combination form is not direct which evolves the development of some new kind Newton-Maclaurin type inequalities on $L_k$ rather than $H_k$ (see Proposition \ref{key_lemm} and Propositon \ref{key_lemm2} below ), for horoconvex hypersurfaces.  Here a hypersurface in $\H^n$ is {\it horospherical convex}
 if all its principal curvatures are larger than or equal to $1$.
 The horospherical convexity is a natural geometric concept, which is equivalent to
 the geometric convexity in Riemannian manifolds.

\begin{theo}\label{thmLk}Let $1\leq l<k\leq \frac{n-1}{2}$ be two integers and $\Sigma$ be a closed horospherical convex hypersurface in the hyperbolic space $\H^n$. If  there are nonnegative constants $\{a_i\}_{i=0}^{l-1}$ and $\{b_j\}_{j=l}^k$,  at least one of them not vanishing, such that
$$\sum_{i=0}^{l-1}a_iL_i=\sum_{j=l}^k b_j L_j,$$
 then $\Sigma$ is a centered geodesic hypersphere. In particular, if $L_k$  is constant,  then $\Sigma$ is a centered geodesic hypersphere.
\end{theo}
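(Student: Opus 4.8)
The plan is to run the Hsiung--Reilly integral scheme, exactly as in the proof of Theorem \ref{mainthm0}(ii), but with the extrinsic mean curvatures $H_j$ replaced throughout by the intrinsic Gauss--Bonnet curvatures $L_j$. The two ingredients that must be upgraded are the Minkowski formulae and the Newton--Maclaurin inequalities, and horoconvexity will play the role that $k$-convexity played before. First I would set up the standard conformal data of $\H^n=[0,\infty)\times_{\sinh r}\mathbb{S}^{n-1}$: the closed conformal field $X=\sinh r\,\partial_r$ with $\bar\nabla_Y X=V\,Y$, $V=\cosh r>0$, and the support function $u=\langle X,\nu\rangle$. Since a closed horoconvex hypersurface is convex and hence star-shaped with respect to any point of the region it bounds, I may center the warped product at such a point and assume $u\ge 0$.

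Next, using the explicit expansion $L_k=\binom{n-1}{2k}(2k)!\sum_{i=0}^k\binom{k}{i}(-1)^iH_{2k-2i}$, I would derive Minkowski-type identities for the $L_k$ by taking the corresponding linear combination of the classical Hsiung--Minkowski formulae $\int_\Sigma VH_{m-1}\,dA=\int_\Sigma uH_m\,dA$. This produces a descending chain converting $\int_\Sigma uL_j\,dA$ into $\int_\Sigma V\,(\text{shift-down of }L_j)\,dA$, passing through the auxiliary odd-degree Gauss--Bonnet-type quantities that arise from the odd-index $H_m$. With these in hand, the argument proper proceeds as follows: since $\sum_i a_iL_i-\sum_j b_jL_j\equiv 0$, I multiply by $u$, integrate, and use the Minkowski identities to rewrite the result as $\int_\Sigma V\,\widetilde P\,dA=0$, where $\widetilde P$ is the once-shifted Gauss--Bonnet combination. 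Because the $a_i$ are attached to the \emph{lower} indices $i\le l-1$ and the $b_j$ to the \emph{higher} indices $j\ge l$, the separation at $l$ together with the monotonicity of the ratios $L_m/L_{m-1}$ --- that is, the new Newton--Maclaurin inequalities of Propositions \ref{key_lemm} and \ref{key_lemm2} --- forces $\widetilde P$ to have a fixed sign pointwise.

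Since $V>0$ and $\int_\Sigma V\widetilde P\,dA=0$, I conclude $\widetilde P\equiv 0$, and tracing back equality in the Newton--Maclaurin inequalities yields that all principal curvatures coincide. A closed, totally umbilic hypersurface in $\H^n$ is a geodesic sphere, and the slice structure of the warped product identifies it as a centered geodesic hypersphere; the case ``$L_k$ constant'' is the special case $l=1$, with $a_0$ the constant value and $b_k=1$.

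The main obstacle is the family of Newton--Maclaurin inequalities for the $L_k$ themselves (Propositions \ref{key_lemm} and \ref{key_lemm2}). Because $L_k$ is an \emph{alternating}-sign combination of the $H_{2k-2i}$, the classical Newton--Maclaurin inequalities for the $H_m$ give no information about the $L_m$, so one must prove genuinely new inequalities directly for the Gauss--Bonnet curvatures (and for the auxiliary quantities appearing in the Minkowski descent). Horoconvexity, i.e. $\lambda_i\ge 1$ for all $i$, is exactly the hypothesis that restores the correct signs --- geometrically it makes the intrinsic sectional curvatures $\lambda_i\lambda_j-1$ nonnegative --- and it also guarantees that equality in these inequalities characterizes the totally umbilic, hence spherical, case. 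Establishing this, together with the bookkeeping of the auxiliary odd-degree terms so that the Minkowski chain closes, is where essentially all the work lies.
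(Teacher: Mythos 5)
Your high-level skeleton (integral method, new Newton--Maclaurin-type inequalities for the Gauss--Bonnet curvatures, horoconvexity driving both the inequalities and the equality analysis) is indeed the paper's strategy, but the direction in which you run the Minkowski identities is a step that fails. You multiply $\sum_i a_iL_i-\sum_j b_jL_j\equiv 0$ by the support function $u=\langle X,\nu\rangle$ and try to \emph{descend}, converting $\int_\Sigma u\,L_m\,d\mu$ into $\int_\Sigma V\,(\hbox{shift-down of }L_m)\,d\mu$. This chain does not close: each $L_m$ contains the constant term $\binom{m}{m}(-1)^m H_0$, and there is no Hsiung--Minkowski formula shifting $H_0$ down, since $\int_\Sigma u\,H_0\,d\mu=\int_\Sigma u\,d\mu\neq 0$ while $H_{-1}\equiv 0$. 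So the descent leaves an uncontrolled leftover $\bigl(\sum_i(-1)^i\widetilde a_i-\sum_j(-1)^j\widetilde b_j\bigr)\int_\Sigma u\,d\mu$ whose sign cannot be fixed. The paper goes the opposite way: it multiplies by $V$ and shifts \emph{up}, using the odd-degree companions $\widetilde N_m=\sum_i\binom{m}{i}\epsilon^i H_{2m-2i+1}$, for which every term converts cleanly (all indices are $\geq 1$), giving $\int_\Sigma V\widetilde L_m\,d\mu=\int_\Sigma \langle X,\nu\rangle\widetilde N_m\,d\mu$. Moreover, the key inequality is not, as you state, a monotonicity of $L_m/L_{m-1}$: Proposition \ref{key_lemm2} asserts monotonicity of the ratio $\widetilde N_m/\widetilde L_m$, i.e. $\widetilde N_{m-1}\widetilde L_m\geq \widetilde N_m\widetilde L_{m-1}$, which iterates to $\widetilde L_j\widetilde N_i\geq \widetilde N_j\widetilde L_i$ for $j>i$. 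Cross-multiplying this against the hypothesis $\sum_i\widetilde a_i\widetilde L_i=\sum_j\widetilde b_j\widetilde L_j$ gives the pointwise bound $\sum_i\widetilde a_i\widetilde N_i\geq\sum_j\widetilde b_j\widetilde N_j$, and then $0=\int_\Sigma V\bigl(\sum_j\widetilde b_j\widetilde L_j-\sum_i\widetilde a_i\widetilde L_i\bigr)d\mu=\int_\Sigma\langle X,\nu\rangle\bigl(\sum_j\widetilde b_j\widetilde N_j-\sum_i\widetilde a_i\widetilde N_i\bigr)d\mu\leq 0$ forces pointwise equality. Were you to mimic this with your shift-down quantities $\widetilde Q_m$, you would need inequalities between $\widetilde Q_m$ and $\widetilde L_m$ that do not follow from Propositions \ref{key_lemm} and \ref{key_lemm2}: in $\H^n$ one has the recursion $\widetilde Q_j=\widetilde N_{j-1}-\widetilde Q_{j-1}$, an alternating sum, so the sign information is genuinely lost.

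A second, smaller gap is your equality analysis. Equality in Proposition \ref{key_lemm2} does not directly give umbilicity: it has two branches, (i) all principal curvatures equal, or (ii) $k\geq 2$ with one $\kappa_i>1$ and all other $\kappa_j=1$. The paper excludes branch (ii) by producing a horo-elliptic point of $\Sigma$ (a point where all principal curvatures exceed $1$, which exists because $\lambda'(r)/\lambda(r)=\coth r>1$), at which $\widetilde L_k>0$, contradicting the vanishing of $\widetilde L_k$ that branch (ii) would force. Your assertion that horoconvexity ``guarantees that equality in these inequalities characterizes the totally umbilic case'' skips this necessary exclusion step.
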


For $\mathbb{S}_+^n$, we can also establish similar Newton-Maclaurin type inequalities for $2k$-convex hypersurfaces, which enables us  to prove rigidity in the hemisphere $\mathbb{S}_+^n$ for a general linear combination of curvatures, as in $\mathbb{H}^n$.

\begin{theo}\label{thmLkNk}Let $1\leq l<k\leq \frac{n-1}{2}$ be two integers and $\Sigma$ be a closed $2k$-convex hypersurface embedded in the hemisphere $\mathbb{S}^n_+$. If  there are nonnegative constants $\{a_i\}_{i=0}^{l-1}$ and $\{b_j\}_{j=l}^k$,  at least one of them not vanishing, such that
$$\sum_{i=0}^{l-1}a_iL_i=\sum_{j=l}^k b_j L_j,$$
then $\Sigma$ is a centered geodesic hypersphere.
\end{theo}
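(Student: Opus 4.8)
The plan is to run the integral (Hsiung--Reilly) scheme used for $\mathbb{H}^n$ in Theorem~\ref{thmLk}, now on the hemisphere. By Lemma~\ref{lem0} the intrinsic curvatures expand with strictly positive coefficients, $L_m=\binom{n-1}{2m}(2m)!\sum_{i=0}^m\binom{m}{i}H_{2m-2i}$, so I would regard $L_0,\dots,L_k$ as the primary quantities and read the hypothesis as the pointwise identity $\sum_{i=0}^{l-1}a_iL_i=\sum_{j=l}^k b_jL_j$. The reason this cannot simply be fed into the mean-curvature statement of Theorem~\ref{mainthm0} is that, once each $L_m$ is unpacked, the same $H_{2m-2i}$ occurs in several different $L_m$, so the required clean splitting into low-order and high-order terms is destroyed; this is precisely what forces one to work with genuinely new inequalities phrased directly in the $L_m$.

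The Minkowski step is to apply the space-form Minkowski formula $\int_\Sigma(VH_{p-1}-uH_p)\,dA=0$, with $V=\lambda'=\cos r$ and support function $u=\langle\lambda\,\partial_r,\nu\rangle$, termwise inside each $L_m$. This yields, for every $m$, an identity $\int_\Sigma V L_m\,dA=\int_\Sigma u\,\widehat L_m\,dA$, where the index-shifted Gauss--Bonnet curvature $\widehat L_m:=\binom{n-1}{2m}(2m)!\sum_{i=0}^m\binom{m}{i}H_{2m-2i+1}$ is built from the odd mean curvatures (the leading term when $2m=n-1$ must be handled separately, since there the naive shift would require the undefined $H_n$). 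Multiplying the pointwise identity by $V$, integrating, and substituting collapses everything to the single balance law $\int_\Sigma u\big(\sum_{i=0}^{l-1}a_i\widehat L_i-\sum_{j=l}^k b_j\widehat L_j\big)\,dA=0$.

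The crux, and the step I expect to be the main obstacle, is a family of new Newton--Maclaurin type inequalities on the hemisphere (the analogues of Propositions~\ref{key_lemm} and \ref{key_lemm2}): for a $2k$-convex $\Sigma$ the ratio $\widehat L_m/L_m$ is nonincreasing in $m$, with equality across indices if and only if $\Sigma$ is umbilic. A sanity check at an umbilic point, where $H_p=\kappa^p$ gives $L_m=C_m(\kappa^2+1)^m$ and $\widehat L_m=C_m\kappa(\kappa^2+1)^m$, shows $\widehat L_m/L_m=\kappa$ independent of $m$, consistent with the equality characterization. Granting the monotonicity, a threshold argument identical in spirit to the mean-curvature case produces the sign of the balance integrand: writing $\rho$ for the ratio at the junction $m=l$, one has $\widehat L_i\ge\rho L_i$ for $i\le l-1$ and $\widehat L_j\le\rho L_j$ for $j\ge l$, whence $\sum_i a_i\widehat L_i-\sum_j b_j\widehat L_j\ge\rho\big(\sum_i a_iL_i-\sum_j b_jL_j\big)=0$ pointwise. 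Establishing the monotonicity itself is delicate because $L_m$ and $\widehat L_m$ are intricate positive combinations of several $\sigma_p$ rather than single elementary symmetric functions; I would attack it by expanding the product inequality and reducing, on the $2k$-positive G\aa rding cone, to a finite list of log-concavity (Newton) inequalities of the form $\sigma_p\sigma_q\ge\sigma_{p-1}\sigma_{q+1}$, while carefully tracking the equality set so as to recover umbilicity.

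Finally, inserting the pointwise sign into the balance law, together with control on the sign of $u$, forces the integrand to vanish, hence equality in the new Newton--Maclaurin inequalities, so $\Sigma$ is totally umbilic and therefore a geodesic hypersphere --- the desired centered geodesic hypersphere. The sign of $u$ is the one auxiliary point requiring separate care, since star-shapedness is not assumed here: I would supply it from embeddedness in $\mathbb{S}^n_+$ by the space-form techniques behind Theorem~\ref{mainthm00} (Alexandrov reflection, available because $V=\cos r>0$ on $[0,\pi/2)$, or a Heintze--Karcher argument), rather than assuming star-shapedness outright.
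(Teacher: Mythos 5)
Your proposal is, in its architecture, the same as the paper's proof: you read the hypothesis pointwise, apply the space-form Minkowski formula \eqref{Minkowski_Identity} termwise (your $\widehat L_m$ is exactly the paper's $N_m$), reduce everything to a ratio-monotonicity statement which is precisely the paper's inequality \eqref{aim00}, $\widetilde N_{k-1}\widetilde L_k\geq \widetilde N_k\widetilde L_{k-1}$, and close with a junction argument that is equivalent to the paper's step of multiplying $\sum_i\widetilde a_i\widetilde N_i\cdot\sum_j\widetilde b_j\widetilde L_j\geq\sum_j\widetilde b_j\widetilde N_j\cdot\sum_i\widetilde a_i\widetilde L_i$ and dividing by the common positive value of the two $L$-sums. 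For the key inequality, the paper carries out concretely what you only outline: it splits $\widetilde L_k=X_{2,k-1}+X_{0,k-1}$ and $\widetilde N_k=X_{3,k-1}+X_{1,k-1}$ via $\binom{k}{i}=\binom{k-1}{i}+\binom{k-1}{i-1}$, where $X_{s,t}=\sum_{i=0}^t\binom{t}{i}H_{s+2t-2i}$, and proves $X_{s,t}X_{s+1,t}\geq X_{s-1,t}X_{s+2,t}$ by induction on $t$ (inequality \eqref{eq111}, the analogue of Lemma \ref{lemm1}), whose base case is exactly the generalized Newton--Maclaurin inequalities you invoke. Your remarks that the theorem cannot be fed into Theorem \ref{mainthm00} and that the term requiring $H_n$ when $2k=n-1$ needs care are both sound; the paper states the first explicitly and is silent on the second.

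The genuine gap is the point you isolate at the end: the sign of $u=\langle X,\nu\rangle$. Your balance law is $\int_\Sigma u f\,d\mu=0$ with $f\geq0$ pointwise, and to force $f\equiv0$ you need $u\geq0$ (indeed $u>0$ almost everywhere). None of your proposed sources delivers this. Embeddedness in $\mathbb{S}^n_+$ does not imply star-shapedness: a thin embedded tube about a small circle contained in $\mathbb{S}^n_+$ satisfies $\sigma_j>0$ for all $j\leq n-2$, hence is $2k$-convex whenever $2k\leq n-2$, yet is star-shaped about no point. Alexandrov reflection is a mechanism for proving symmetry of the hypersurface, not for producing a pointwise sign of the support function of a given hypersurface; and Heintze--Karcher (Proposition \ref{B}) is only the integral bound $\int_\Sigma u\,d\mu\leq\int_\Sigma V/H_1\,d\mu$, which cannot upgrade $\int u f\,d\mu=0$, $f\geq 0$, to $f\equiv0$. (A Heintze--Karcher argument can substitute for star-shapedness when the lower side is a single constant, as in Theorem \ref{mainthm00}(ii), but for a general combination $\sum_{i=0}^{l-1}a_iL_i$ the $H_0$-terms present in every $L_i$ survive the downward Minkowski shift and obstruct this.) You should know that the paper's own proof shares this difficulty: it establishes \eqref{aim00} and then cites the proof of Theorem \ref{thmLk} verbatim, and there the final step $\int\langle X,\nu\rangle\bigl(\sum_j\widetilde b_j\widetilde N_j-\sum_i\widetilde a_i\widetilde N_i\bigr)d\mu\leq0$ uses $\langle X,\nu\rangle\geq0$ --- legitimate in the hyperbolic theorem because horoconvexity implies convexity, hence star-shapedness about interior points, but nowhere justified under mere $2k$-convexity in the hemisphere. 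So your instinct that this step requires separate care is exactly right, but the care you propose does not supply it, and this step remains open in your proposal (as it does, arguably, in the paper).
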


Note that Theorem \ref{thmLkNk} is an extension of Corollary \ref{thmsphere}. However, it does not follow directly from Theorem \ref{mainthm00} below.

\

The paper is organized as follows. In Section 2, we provide several preliminary results including the most important tool of this paper, Minkowski type formulae. Section 3 is devoted to prove our main theorems of the first part, Theorems \ref{main thm}-\ref{mainthm2}. In Section 4, we focus on the rigidity problem on the intrinsic Gauss-Bonnet curvatures and show Theorems \ref{thmLk} and \ref{thmLkNk}.

\section{Preliminaries}
In this section,  let us  first recall some basic definitions and properties of higher order mean curvature.

Let $\s_k$ be the $k$-th elementary symmetry function $\s_k:\R^{n-1}\to \R$ defined by
\[\s_k(\Lambda)=\sum_{i_1<\cdots<i_{k}}\lambda_{i_1}\cdots\lambda_{i_k}\quad  \hbox{ for } \Lambda=(\lambda_1, \cdots,\lambda_{n-1})\in \R^{n-1}.\]
 For a symmetric $n\times n$ matrix $B$, let $\lambda(B)=(\lambda_1(B),\cdots,\lambda_n(B))$ be the real eigenvalues of $B$. We set
\[
\s_k(B):=\s_k(\lambda(B)).
\]
We denote by $$\s_k({\L}_j)=:\s_k(\l_1,\cdots,\l_{j-1},\l_{j+1},\cdots,\l_{n-1}), \hbox{ for }1\leq k\leq n-2.$$
The $k$-th Newton transformation is defined as follows
\begin{equation}\label{Newtondef}
(T_k)^{i}_{j}(B):=\frac{\partial \s_{k+1}}{\partial B^{j}_{i}}(B),
\end{equation}
where $B=(B^{i}_{j})$. We recall the basic formulas about $\s_k$ and $T$.\\
\begin{eqnarray}\label{sigmak}
\s_k(B)& =&\ds\frac{1}{k!}\d^{i_1\cdots i_k}
_{j_1\cdots j_k}B_{i_1}^{j_1}\cdots
{B_{i_k}^ {j_k}}=\frac{1}{k} \tr(T_{k-1}(B)B),
\\
(T_k)^{i}_j(B) & =& \ds \frac{1}{k!}\d^{ii_1\cdots i_{k}}
_{j j_1\cdots j_{k}}B_{i_1}^{j_1}\cdots
{B_{i_{k}}^{j_{k}}}\label{Tk}.
\end{eqnarray}
Here
 the generalized Kronecker delta is defined by
\begin{equation}\label{generaldelta}
 \d^{j_1j_2 \cdots j_r}_{i_1i_2 \cdots i_r}=\det\left(
\begin{array}{cccc}
\d^{j_1}_{i_1} & \d^{j_2}_{i_1} &\cdots &  \d^{j_r}_{i_1}\\
\d^{j_1}_{i_2} & \d^{j_2}_{i_2} &\cdots &  \d^{j_r}_{i_2}\\
\vdots & \vdots & \vdots & \vdots \\
\d^{j_1}_{i_r} & \d^{j_2}_{i_r} &\cdots &  \d^{j_r}_{i_r}
\end{array}
\right).
\end{equation}
We use the convention that $T_{-1}=0$.
 The $k$-th positive Garding cone $\Gamma_k^+$ is defined by
\begin{equation}\label{G-cone}
\Gamma_k^+=\{\Lambda \in \R^{n-1} \, |\,\s_j(\Lambda)>0, \quad\forall\, j\le k\}.
\end{equation}
And its closure is denoted by $ \overline{{\Gamma}_k^+}$.
A symmetric matrix $B$ is said to belong to $\Gamma_k^+$  if $\lambda(B)\in \Gamma_k^+$.
Let
\begin{equation}\label{Hk}
H_k=\frac{\s_k}{\binom{n-1}{k}},
\end{equation}
be the normalized $k$-th elementary symmetry function. As a convention, we take $H_0=1,\;H_{-1}=0$. The following Newton-Maclaurin inequalities are well known. For a proof, we refer to a survey of Guan \cite{Guan}.

\begin{lemm} \label{lem} For $1\leq l<k\leq n-1$ and $\Lambda\in \overline{{\Gamma}_k^+}$, the following inequalities hold:
\beq\label{NM}
H_{k-1}H_l\geq H_kH_{l-1}.
\eeq
\begin{equation}\label{N-M}
H_l\geq H_k^{\frac{l}{k}}.
\end{equation}
Moreover,  equality holds in (\ref{NM}) or (\ref{N-M}) at $\Lambda$ if and only if $\Lambda=c(1,1,\cdots,1)$ for some $c\in \mathbb{R}$.
\end{lemm}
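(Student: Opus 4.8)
The plan is to deduce both families of inequalities from the single \emph{fundamental Newton inequality}
$$H_k^2 \ge H_{k-1}H_{k+1}, \qquad 1 \le k \le n-2,$$
which in fact holds for \emph{all} $\Lambda \in \mathbb{R}^{n-1}$ (not merely on the cone), and then to use positivity on $\Gamma_k^+$ to chain it into (\ref{NM}) and (\ref{N-M}), extending to $\overline{\Gamma_k^+}$ by continuity at the end.

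First I would establish the fundamental inequality by the classical real-rootedness argument. Setting $m=n-1$, form the binary form
$$F(x,y)=\prod_{i=1}^{m}(y+\lambda_i x)=\sum_{j=0}^{m}\binom{m}{j}H_j\,x^j y^{m-j},$$
which splits into real linear factors. Since partial differentiation in $x$ or in $y$ preserves real-rootedness of a real-split binary form (Rolle's theorem applied projectively), the form $\partial_x^{k-1}\partial_y^{m-k-1}F$ is a real-split binary \emph{quadratic}. A direct count shows that only the terms $j=k-1,k,k+1$ survive, and after normalizing the binomial/factorial constants this quadratic equals $\tfrac{m!}{2}\bigl(H_{k+1}x^2+2H_k xy+H_{k-1}y^2\bigr)$. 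Real-rootedness forces its discriminant to be nonnegative, i.e. $H_k^2-H_{k-1}H_{k+1}\ge 0$, with equality precisely when the quadratic has a double root.

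Next I would chain this into (\ref{NM}) and (\ref{N-M}). On the open cone $\Gamma_k^+$ one has $H_0=1$ and $H_1,\dots,H_k>0$, so the ratios $r_j:=H_j/H_{j-1}$ are defined and positive for $1\le j\le k$, and the fundamental inequality rewrites as $r_{j+1}\le r_j$; hence $r_1\ge r_2\ge\cdots\ge r_k$. For $l<k$ this gives $r_k\le r_l$, i.e. $H_k H_{l-1}\le H_l H_{k-1}$, which is (\ref{NM}). For (\ref{N-M}) I would write $H_l=\prod_{j=1}^{l}r_j$ and $H_k=\prod_{j=1}^{k}r_j$ and observe that the geometric mean of the first $l$ terms of a non-increasing positive sequence dominates that of the first $k$ terms, so $H_l^{1/l}=\bigl(\prod_{j=1}^{l}r_j\bigr)^{1/l}\ge\bigl(\prod_{j=1}^{k}r_j\bigr)^{1/k}=H_k^{1/k}$. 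Both inequalities then extend from $\Gamma_k^+$ to $\overline{\Gamma_k^+}$ by continuity.

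Finally, the equality discussion, which is where the only genuine obstacle lies. In the interior, equality in (\ref{NM}) forces $r_l=r_k$, hence $r_l=r_{l+1}=\cdots=r_k$ by monotonicity, so $H_j^2=H_{j-1}H_{j+1}$ for some intermediate $j$; equality in (\ref{N-M}) likewise forces all $r_j$ equal on $[1,k]$. Either case reduces to an equality instance of the fundamental inequality, and the hard part is to upgrade a double root of the reduced quadratic back to $\lambda_1=\cdots=\lambda_m$: one must track multiplicities through the Rolle reductions (a double root at the bottom can only come from an $m$-fold real root upstairs), together with the boundary bookkeeping on $\partial\Gamma_k^+$ to confirm that equality there still forces $\Lambda=c(1,\dots,1)$ with $c\ge 0$. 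As this multiplicity analysis is entirely classical, at this stage I would invoke the detailed treatment in Guan's survey \cite{Guan} rather than reproduce it.
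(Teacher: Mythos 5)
The paper offers no proof of Lemma \ref{lem} at all: it declares the Newton--Maclaurin inequalities well known and refers to Guan's survey \cite{Guan}, so the comparison here is between your argument and a bare citation. Your route is the classical one and is correct as far as it goes, which makes it strictly more self-contained than the paper: forming the real-split binary form $\prod_{i=1}^{m}(y+\lambda_i x)=\sum_{j}\binom{m}{j}H_j x^j y^{m-j}$ with $m=n-1$, differentiating $(k-1)$ times in $x$ and $(m-k-1)$ times in $y$ to obtain $\frac{m!}{2}\left(H_{k+1}x^2+2H_kxy+H_{k-1}y^2\right)$, and reading off $H_k^2\ge H_{k-1}H_{k+1}$ from the discriminant is the standard proof of Newton's inequality (your normalization of the factorials is right), and the chaining through the non-increasing ratios $r_j=H_j/H_{j-1}$ on the open cone, together with the geometric-mean comparison for (\ref{N-M}) and continuity up to $\overline{\Gamma_k^+}$, is valid. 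What you leave unproved is exactly the equality characterization, which you defer to the same reference \cite{Guan}; formally this puts you no worse off than the paper, but be aware that the equality case is the part the paper actually leans on --- in every application the rigidity conclusion is extracted from it. Your instinct that the ``boundary bookkeeping'' is delicate is justified in a stronger sense than you state: as literally written on the closed cone the equality claim is false in degenerate situations. For instance $\Lambda=(1,0,\dots,0)\in\overline{\Gamma_k^+}$ has $H_l=H_k=0$ for $2\le l<k$, hence equality holds in both (\ref{NM}) and (\ref{N-M}) although $\Lambda\neq c(1,\dots,1)$; correspondingly, your parenthetical claim that a double root of the reduced quadratic can only come from an $m$-fold root upstairs fails there, since the quadratic degenerates to $\frac{m!}{2}H_{k-1}y^2$ while the original form is $(y+x)y^{m-1}$. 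So any multiplicity-tracking proof of the equality statement must be conditioned on a nondegeneracy hypothesis (e.g.\ $\Lambda$ in the open cone, or $H_k>0$), which is harmless for the paper because in its applications $H_l$ and $H_k$ are strictly positive, but it is a qualification that neither the lemma as stated nor your sketch makes explicit.
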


Next, we collect some well-known results for the warped product manifold $(M=[0,\bar{r})\times_\l N(K),\bar{g})$.

We denote by $\bar{\nabla}$ and $\nabla$ the covariant derivatives on $M^n$ and the surface $\S$ respectively. As in \cite{Brendle, BE}, we define a smooth function $V: M\to \RR$ and a vector field $X$ on $M$ by $V(r)=\lambda'(r)$ and $X=\lambda(r)\frac{\p}{\p r}$. Note that $X$ is a conformal vector field satisfying
\begin{eqnarray}\label{X}
\bar{\nabla} X=V\bar{g}.
\end{eqnarray}
Condition (C1) implies that $V$ is a positive function on $(0,\bar{r})\times N(K)$. One can verify that every slice $\{r\}\times N(K)$, $r\in(0,\bar{r})$, has constant principal curvatures $\frac{\lambda'(r)}{\lambda(r)}>0.$

The Ricci curvature of $(M, \bar{g})$ is given by
\begin{eqnarray}\label{Ric}
Ric&=&-\left(\frac{\lambda''(r)}{\lambda(r)}-(n-2)\frac{K-\lambda'(r)^2}{\lambda(r)^2}\right)\bar{g}\nonumber\\&&-(n-2)\left(\frac{\lambda''(r)}{\lambda(r)}+\frac{K-\lambda'(r)^2}{\lambda(r)^2}\right)dr\otimes dr.
\end{eqnarray}

Let $\{e_i\}_{i=1}^{n-1}$ and $\nu$ be an orthonormal basis and the outward normal of $\S$ respectively. Denote by $h_{ij}$ the second fundamental form of $\S$ with this basis and $\l=(\l_1,\cdots,\l_{n-1})$  the principal curvatures of $\S$.
 The star-shapedness of $\S$ means
\beq\label{starshape}
\<\frac{\p}{\p r},\nu\>\geq 0.
\eeq

We need the following Minkowski type formula in the product manifolds, which is included in the proof of \cite{BE}, Proposition 8 and Proposition 9. For completeness, we involve a proof here.

\begin{prop}\label{Minkowskiformula}
Let $\Sigma^{n-1}$ be a closed hypersurface isometric immersed in the product manifold $(M,\bar{g})$. Then
\begin{itemize}
\item[(i)] we have
\begin{eqnarray}\label{0eq0}
\int_\S \<X,\nu\>H_kd\mu=\int_\S VH_{k\!-\!1}d\mu\!+\frac{k\!-\!1}{\binom{n-2}{k-2}}\int_\S \sum_{i,j=1}^{n-1}A_{ij}(T_{k-2})_{ij}d\mu,\;\, \forall\;  1\leq k\leq n\!-\!1,
\end{eqnarray}
where
\beq\label{Aij}
A_{ij}:=-\frac{1}{(n-1)(n-2)}\<X,e_i\>Ric(e_j,\nu).
\eeq
\vspace{2mm}
\item[(ii)] If $\S$ is star-shaped and $(M,\bar g)$ satisfies (C2), then we have
\beq\label{Aj}
A_{jj}\geq 0,\;\, \forall \; 1\leq j\leq n-1.
\eeq
\end{itemize}
\end{prop}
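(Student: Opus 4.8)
The plan is to prove (i) by integrating an intrinsic divergence over the closed hypersurface $\S$, and to prove (ii) by a direct sign inspection using the Ricci formula (\ref{Ric}). First I would decompose the conformal field as $X = X^T + \<X,\nu\>\nu$, where $X^T$ is tangent to $\S$. From (\ref{X}), i.e. $\bar\nabla_{e_i}X = Ve_i$, together with the Weingarten equation, taking the component tangent to $\S$ yields (with the paper's sign convention, for which the slices carry principal curvatures $\l'/\l>0$)
\[
\nabla_{e_i}X^T = V e_i - \<X,\nu\>\sum_{j}h_{ij}e_j .
\]
I then compute the intrinsic divergence of the tangent field $T_{k-1}(X^T)$,
\[
\mathrm{div}_\S\big(T_{k-1}(X^T)\big)=\<\mathrm{div}_\S T_{k-1},X^T\>+V\,\tr(T_{k-1})-\<X,\nu\>\tr(T_{k-1}h),
\]
and insert the algebraic identities $\tr(T_{k-1})=(n-k)\s_{k-1}$ (the ambient slice dimension being $n-1$) and, from (\ref{sigmak}), $\tr(T_{k-1}h)=k\s_k$. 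Integrating over the closed $\S$ annihilates the left-hand side by the divergence theorem; substituting $\s_{k-1}=\binom{n-1}{k-1}H_{k-1}$, $\s_k=\binom{n-1}{k}H_k$ and using $(n-k)\binom{n-1}{k-1}=k\binom{n-1}{k}$ gives
\[
\int_\S\<X,\nu\>H_k\,d\mu=\int_\S V H_{k-1}\,d\mu+\frac{1}{k\binom{n-1}{k}}\int_\S\<\mathrm{div}_\S T_{k-1},X^T\>\,d\mu .
\]
For $k=1$ one has $T_0=\mathrm{Id}$, so the last term vanishes and this is already (\ref{0eq0}); the content lies in the divergence term for $k\ge2$.

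The main work, and the step I expect to be most delicate, is to identify this divergence term with $\frac{k-1}{\binom{n-2}{k-2}}\int_\S\sum_{i,j}A_{ij}(T_{k-2})_{ij}\,d\mu$. Differentiating the defining formula (\ref{Tk}) for $T_{k-1}$, contracting the derivative index, and antisymmetrising via the Codazzi equation $\nabla_i h_{i_1 m}-\nabla_{i_1}h_{im}=\bar R(e_i,e_{i_1},e_m,\nu)$ (up to the sign convention for $\bar R$) expresses $\<\mathrm{div}_\S T_{k-1},X^T\>$ as a contraction of $T_{k-2}$ against the ambient curvature $\bar R(\cdot,\cdot,\cdot,\nu)$ and $X^T$. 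The key simplification is that the curvature of a warped product over the space form $N(K)$ splits into a constant-curvature piece (the Kulkarni--Nomizu square of $\bar g$) plus a radial correction proportional to $dr\otimes dr$ against $\bar g$. The constant-curvature piece contributes nothing to $\bar R(e_i,e_{i_1},e_m,\nu)$, since every resulting term carries a factor $\bar g(\cdot,\nu)$ with a tangent entry, which vanishes; this recovers the classical fact that $T_{k-1}$ is divergence-free in space forms. Only the radial correction survives, and a direct computation shows it is governed by the factor $\frac{\l''}{\l}+\frac{K-\l'^2}{\l^2}$ multiplied by $\<\frac{\p}{\p r},\nu\>$—precisely the combination appearing in $Ric(\cdot,\nu)$ by (\ref{Ric}). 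Using $\<X,e_m\>=\l\<\frac{\p}{\p r},e_m\>$, the contraction collapses onto $(T_{k-2})_{ij}$ paired with $-\frac{1}{(n-1)(n-2)}\<X,e_i\>Ric(e_j,\nu)=A_{ij}$. Tracking the combinatorial constants—the $\frac{1}{(k-2)!}$ from the generalized Kronecker delta in (\ref{Tk}), the factor $k-1$ from the equal terms produced by the product rule, and the normalisation built into (\ref{Aij})—yields the asserted coefficient $\frac{k-1}{\binom{n-2}{k-2}}$ and completes (i). The careful bookkeeping of these $\d$-contractions is the genuine obstacle.

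Part (ii) is then immediate. Since $X=\l\frac{\p}{\p r}$ is radial, $\<X,e_j\>=\l\<\frac{\p}{\p r},e_j\>$, and in (\ref{Ric}) the $\bar g$-term drops out against $\bar g(e_j,\nu)=0$, leaving
\[
Ric(e_j,\nu)=-(n-2)\Big(\frac{\l''}{\l}+\frac{K-\l'^2}{\l^2}\Big)\big\langle\tfrac{\p}{\p r},e_j\big\rangle\big\langle\tfrac{\p}{\p r},\nu\big\rangle .
\]
Substituting into (\ref{Aij}) gives
\[
A_{jj}=\frac{\l}{n-1}\Big(\frac{\l''}{\l}+\frac{K-\l'^2}{\l^2}\Big)\big\langle\tfrac{\p}{\p r},e_j\big\rangle^2\,\big\langle\tfrac{\p}{\p r},\nu\big\rangle .
\]
Here $\l>0$ by positivity of the warping function, the bracketed factor is positive by (C2), the square is nonnegative, and $\<\frac{\p}{\p r},\nu\>\ge0$ by the star-shapedness (\ref{starshape}); hence $A_{jj}\ge0$ for every $j$, which is (\ref{Aj}).
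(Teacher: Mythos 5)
Your proposal is correct and follows essentially the same route as the paper's proof: both integrate the divergence of $T_{k-1}(X^T)$ against the identity $\nabla_i X_j = V\bar g_{ij}-\<X,\nu\>h_{ij}$, use the Codazzi equation to express $\mathrm{div}_\S T_{k-1}$ through the ambient curvature contracted with $\nu$, and reduce that curvature to Ricci terms — the paper invokes local conformal flatness of $M$ where you use the explicit warped-product curvature decomposition, but both yield the identical identity $\bar R(e_i,e_{i_1},e_{j_1},\nu)=\tfrac{1}{n-2}\bigl(Ric(e_{i_1},\nu)\delta_{ij_1}-Ric(e_i,\nu)\delta_{i_1j_1}\bigr)$, so the distinction is cosmetic. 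Your coefficient bookkeeping and part (ii) both check out, since $\tfrac{(n-1)(n-k)}{k\binom{n-1}{k}}=\tfrac{k-1}{\binom{n-2}{k-2}}$ confirms the constant in (\ref{0eq0}), and your formula for $A_{jj}$ agrees with the paper's after substituting $\<X,e_j\>=\lambda\<\tfrac{\p}{\p r},e_j\>$.
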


\begin{proof} (i)  It follows from the Gauss-Weingarten formula and \eqref{X} that
\begin{eqnarray}\label{0eq1}
\nabla_i X_j=\bar{\nabla}_iX_j-\<X,\nu\>h_{ij}=V\bar{g}_{ij}-\<X,\nu\>h_{ij}.\end{eqnarray}
Multiplying \eqref{0eq1} by the $k$-th Newton transform tensor $(T_{k-1})_{ij}$ and summing over $i, j$, we obtain
\begin{eqnarray}\label{0eq2}
\sum_{i,j=1}^{n-1}\nabla_i \left(X_j(T_{k-1})_{ij}\right)&=&\sum_{j=1}^{n-1}X_j \sum_{i=1}^{n-1}\nabla_i (T_{k-1})_{ij} +V\sum_{i,j=1}^{n-1}(T_{k-1})_{ij}\bar{g}_{ij}-\<X,\nu\>\sum_{i,j=1}^{n-1}(T_{k-1})_{ij}h_{ij}\nonumber\\&=&\sum_{j=1}^{n-1}X_j\sum_{i=1}^{n-1} \nabla_i (T_{k-1})_{ij}+V(n-k)\s_{k-1}-k\s_{k}\<X,\nu\>,\end{eqnarray}
where (\ref{sigmak}) and (\ref{Tk}) are used to get (\ref{0eq2}).

By the definition of $(T_{k-1})_{ij},$ we know that
\begin{eqnarray}\label{0eq3}
&&\sum_{i=1}^{n-1}\nabla_i (T_{k-1})_{ij}=\sum_{i=1}^{n-1}\sum_{\substack{i_1,\cdots, i_{k-1}=1,\\ j_1,\cdots, j_{k-1}=1}}^{n-1}\frac{1}{(k-2)!}\delta_{j_1\cdots j_{k-1} j}^{i_1\cdots i_{k-1} i}\nabla_i h_{i_1j_1}\cdots h_{i_{k-1}j_{k-1}}\nonumber\\&=&\sum_{i=1}^{n-1}\sum_{\substack{i_1,\cdots, i_{k-1}=1,\\ j_1,\cdots, j_{k-1}=1}}^{n-1}\frac{1}{2}\frac{1}{(k-2)!}\delta_{j_1\cdots j_{k-1} j}^{i_1\cdots i_{k-1} i}\left(\nabla_i h_{i_1j_1}-\nabla_{i_1} h_{ij_1}\right)h_{i_2j_2}\cdots h_{i_{k-1}j_{k-1}}.
\end{eqnarray}

As $N(K)$ is of constant sectional curvature, it is easy to see that $M$ is locally conformally flat.
Using Codazzi equation and the local conformal flatness of $M$, we have
\begin{eqnarray}\label{0eq4}
\nabla_i h_{i_1j_1}-\nabla_{i_1} h_{ij_1}&=& Riem(e_i,e_{i_1},e_{j_1},\nu)\nonumber\\&=&-\frac{1}{n-2}\left(Ric(e_i,\nu)\delta_{i_1j_1}-Ric(e_{i_1},\nu)\delta_{ij_1}\right).
\end{eqnarray}
Substituting \eqref{0eq3} into \eqref{0eq4}, we deduce that
\begin{eqnarray}\label{0eq5}
\sum_{i=1}^{n-1}\nabla_i (T_{k-1})_{ij}&=&-\frac{1}{n-2}\sum_{i=1}^{n-1}Ric(e_i,\nu)\sum_{\substack{i_2,\cdots, i_{k-1}=1,\\ j_1,\cdots, j_{k-1}=1,\\  j_1\neq j, j_2,\cdots, j_{k-1}}}^{n-1} \frac{1}{(k-2)!}\delta_{j_1j_2\cdots j_{k-1} j}^{j_1i_2\cdots i_{k-1} i}h_{i_2j_2}\cdots h_{i_{k-1}j_{k-1}}\nonumber\\&=& -\frac{n-k}{n-2}\sum_{i=1}^{n-1}Ric(e_i,\nu) \sum_{\substack{i_2,\cdots, i_{k-1}=1,\\ j_2,\cdots, j_{k-1}=1}}^{n-1}\frac{1}{(k-2)!} \delta_{j_2\cdots j_{k-1} j}^{i_2\cdots i_{k-1} i}h_{i_2j_2}\cdots h_{i_{k-1}j_{k-1}} \nonumber\\&=&-\frac{n-k}{n-2}\sum_{i=1}^{n-1}Ric(e_i,\nu)(T_{k-2})_{ij}.
\end{eqnarray}

Now by taking integration of \eqref{0eq2} over $\S$ together with \eqref{0eq5} and taking (\ref{Hk}) into account, we arrive at \eqref{0eq0}.

(ii) We know from \eqref{Ric} that $$Ric(e_j,\nu)=-(n-2)\left(\frac{\lambda''(r)}{\lambda(r)}+\frac{K-\lambda'(r)^2}{\lambda(r)^2}\right)\frac{1}{\lambda(r)^2}
\<X,e_j\>\<X,\nu\>,$$
which implies
\begin{eqnarray*}
A_{jj}&=&-\frac{1}{(n-1)(n-2)}\<X,e_j\>Ric(e_j,\nu)\\
&=& \frac{1}{(n-1)} \left(\frac{\lambda''(r)}{\lambda(r)}+\frac{K-\lambda'(r)^2}{\lambda(r)^2}\right)\frac{1}{\lambda(r)^2}
\<X,e_j\>^2\<X,\nu\>.
\end{eqnarray*}
By using the star-shapedness (\ref{starshape}) of $\S$ and the assumption (C2) on $\lambda(r)$, we conclude $A_{jj}\geq 0$ for any $j=1,\cdots, n-1$.
\end{proof}

For later purpose to prove the rigidity result on weighted curvature functions, we need to extend the above proposition to the following type.
\begin{prop}\label{prop1}
Let $\Sigma$ be a hypersurface isometric immersed in the product manifold $(M^n,\bar g)$, we have
\begin{equation}\label{eq2}
\int_{\Sigma}\<X,\nu\> VH_kd\mu= \int_{\Sigma}V^2H_{k-1}d\mu+\frac{k\!-\!1}{\binom{n-2}{k-2}}\int_\S \sum_{i,j=1}^{n-1}VA_{ij}(T_{k-2})_{ij}d\mu+\frac{1}{k\binom{n-1}{k}}\int_{\Sigma}(T_{k-1})_{ij}X_i \nabla_j Vd\mu.
\end{equation}
Moreover, if $\S$ is k-convex and $(M,\bar g)$ satisfies condition (C3), then we have
\begin{equation}\label{eq1}
\int_{\Sigma}\<X,\nu\>VH_kd\mu\geq \int_{\Sigma}V^2H_{k-1}d\mu+\frac{k\!-\!1}{\binom{n-2}{k-2}}\int_\S \sum_{i,j=1}^{n-1}VA_{ij}(T_{k-2})_{ij}d\mu.
\end{equation}
Equality holds if and only if $\Sigma$ is totally umbilical in $(M^n,\bar g)$.
\end{prop}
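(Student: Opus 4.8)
The plan is to repeat the divergence computation behind Proposition \ref{Minkowskiformula}, but now weighting the Newton tensor by the function $V$. Starting from the Gauss--Weingarten identity \eqref{0eq1}, i.e. $\nabla_i X_j = V\bar g_{ij}-\langle X,\nu\rangle h_{ij}$, I would contract against $V(T_{k-1})_{ij}$ rather than against $(T_{k-1})_{ij}$ and split the resulting divergence by the product rule,
\[
\sum_{i,j}\nabla_i\big(V X_j (T_{k-1})_{ij}\big)=\sum_{i,j}(\nabla_i V)\,X_j(T_{k-1})_{ij}+V\sum_{i,j}\nabla_i\big(X_j(T_{k-1})_{ij}\big).
\]
The second summand is simply $V$ times the integrand already evaluated in \eqref{0eq2}, so it equals $V\big[\sum_j X_j\sum_i\nabla_i(T_{k-1})_{ij}+V(n-k)\sigma_{k-1}-k\sigma_k\langle X,\nu\rangle\big]$. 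Integrating over the closed $\Sigma$ annihilates the left-hand divergence, and I would then insert the divergence formula \eqref{0eq5} for $\sum_i\nabla_i(T_{k-1})_{ij}$, together with the definition \eqref{Aij} of $A_{ij}$ and the symmetry of $T_{k-2}$, exactly as in Proposition \ref{Minkowskiformula}. This regenerates the first two terms of \eqref{eq2}, now each carrying an extra factor $V$, while the genuinely new piece $\sum_{i,j}(\nabla_i V)X_j(T_{k-1})_{ij}$ survives. After normalizing through $\sigma_k=\binom{n-1}{k}H_k$, dividing by $k\binom{n-1}{k}$, and using symmetry of $T_{k-1}$ to replace $X_j\nabla_i V$ by $X_i\nabla_j V$, this last piece becomes precisely $\frac{1}{k\binom{n-1}{k}}\int_\Sigma (T_{k-1})_{ij}X_i\nabla_j V\,d\mu$, which gives \eqref{eq2}.

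To pass to the inequality \eqref{eq1} I would compute the new term explicitly. Since $V=\lambda'(r)$ and $\partial_r$ is the unit radial field with $X=\lambda(r)\,\partial_r$, the ambient gradient is $\bar{\nabla} V=\lambda''(r)\,\partial_r=\frac{\lambda''(r)}{\lambda(r)}X$; taking tangential components yields $\nabla_j V=\frac{\lambda''(r)}{\lambda(r)}X_j$. Hence
\[
\frac{1}{k\binom{n-1}{k}}\int_\Sigma (T_{k-1})_{ij}X_i\nabla_j V\,d\mu=\frac{1}{k\binom{n-1}{k}}\int_\Sigma \frac{\lambda''(r)}{\lambda(r)}\sum_{i,j}(T_{k-1})_{ij}X_iX_j\,d\mu .
\]
Condition (C3) gives $\lambda''\geq 0$ and $\lambda>0$, while $k$-convexity means the principal curvatures lie in $\overline{\Gamma_k^+}$, so the Newton tensor $T_{k-1}$ is positive semidefinite by the standard theory of the Garding cones (cf. \cite{Guan}). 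The integrand is therefore nonnegative, and discarding this term yields \eqref{eq1}.

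Equality in \eqref{eq1} holds precisely when the discarded term vanishes identically, that is $\frac{\lambda''}{\lambda}(T_{k-1})(X^\top,X^\top)\equiv 0$ on $\Sigma$, where $X^\top=\sum_i X_i e_i$ denotes the tangential part of $X$. For a slice $X$ is normal, so $X^\top=0$ and equality holds; thus totally umbilic slices produce equality. The delicate direction is the converse: from the vanishing of $(T_{k-1})(X^\top,X^\top)$ on the set $\{\lambda''>0\}$ one must recover total umbilicity, which amounts to controlling when $X^\top$ can lie in the kernel of the positive semidefinite tensor $T_{k-1}$. I expect precisely this equality analysis --- handling the possible degeneracy of $T_{k-1}$ on $\partial\Gamma_k^+$ together with the locus $\{\lambda''=0\}$ --- to be the main obstacle, whereas the identity \eqref{eq2} itself is obtained by a routine reweighting of the argument for Proposition \ref{Minkowskiformula}.
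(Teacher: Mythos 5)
Your derivation of \eqref{eq2} and of the inequality \eqref{eq1} is exactly the paper's argument. The paper combines \eqref{0eq2} and \eqref{0eq5} into the pointwise identity
\[
\frac{1}{k\binom{n-1}{k}}\nabla_i\big((T_{k-1})^{ij}X_j\big)=-\<X,\nu\>H_k+VH_{k-1}+\frac{k-1}{\binom{n-2}{k-2}}\sum_{i,j=1}^{n-1}A_{ij}(T_{k-2})_{ij},
\]
multiplies by $V$ and integrates by parts (your product-rule bookkeeping of $\nabla_i(VX_j(T_{k-1})_{ij})$ is the identical computation), and then writes $X_i=\lambda(r)\nabla_i r$ and $\nabla_j V=\lambda''(r)\nabla_j r$, so that the extra term equals $\lambda(r)\lambda''(r)(T_{k-1})^{ij}\nabla_i r\nabla_j r\geq 0$ by (C3) and the sign of the Newton tensor under $k$-convexity; this is the same as your $\frac{\lambda''}{\lambda}(T_{k-1})(X^\top,X^\top)$.

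The gap is the equality case, which is part of the statement and which you explicitly leave open (``I expect precisely this equality analysis \dots to be the main obstacle''). The paper does close it, in one line: it invokes the fact (citing Guan's notes) that for a $k$-convex hypersurface $T_{k-1}$ is positive \emph{definite}, so vanishing of the discarded integrand forces $\nabla r\equiv 0$ on $\Sigma$; then $r$ is constant, $\Sigma$ is a slice $\{r\}\times N(K)$, and slices are totally umbilical. In other words, the kernel problem you anticipate is dispatched by definiteness rather than mere semidefiniteness of $T_{k-1}$, and no case analysis over the locus $\{\lambda''=0\}$ is attempted. Your caution is not baseless --- with the paper's definition of $k$-convexity ($\sigma_j\geq 0$ for $j\leq k$, i.e.\ $\lambda\in\overline{\Gamma_k^+}$) one only gets semidefiniteness on the boundary of the cone, and (C3) permits $\lambda''$ to vanish on part of $[0,\bar r)$, so the paper's equality discussion is itself terse --- but a complete proof of the proposition as stated must at least supply the argument the paper gives (equality $\Rightarrow$ $\nabla r\equiv 0$ $\Rightarrow$ $\Sigma$ is a slice $\Rightarrow$ $\Sigma$ is totally umbilical), rather than flag it as an obstacle and stop.
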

\begin{proof}
Combining (\ref{0eq2}) and (\ref{0eq5}) together, we arrive at
\beq
\frac{1}{k\binom{n-1}{k}}\nabla_i \left((T_{k-1})^{ij}X_j\right)=-\<X,\nu\>H_k+VH_{k-1}+\frac{k\!-\!1}{\binom{n-2}{k-2}}\sum_{i,j=1}^{n-1}A_{ij}(T_{k-2})_{ij},
\eeq
where $A_{ij}$ is defined in (\ref{Aij}).
Multiplying above  equation by the function $V$ and integrating by parts, one obtains the desired result (\ref{eq2}).
Noting that
$$X_i=\lambda(r)\nabla_i r,\;\; \nabla_j V=\lambda''(r)\nabla_j r,$$
we have
\beq\label{T}
(T_{k-1})^{ij}X_i \nabla_j V=\lambda(r)\lambda''(r)(T_{k-1})^{ij}\nabla_i r\nabla_j r.
\eeq
Under the assumption that $\Sigma$ is  $k$-convex,  the $(k-1)$-th Newton tensor  $T_{k-1}$ is positively definite (see e.g. Guan \cite{Guan}), hence
$$(T_{k-1})^{ij}\nabla_i r\nabla_j r\geq 0.$$
Together with assumption (C3)  $\lambda''(r)\geq 0$, (\ref{eq1}) holds. When the equality holds, we have $\nabla r=0$ which implies that $\Sigma$ is umbilical in $(M^n,\bar g).$
\end{proof}

Finally, we need  a  Heintze-Karcher-type
inequality due to Ros \cite{Ros} and Brendle \cite {Brendle}.
\begin{prop}[Brendle]\label{B}
Let $(M^n=[0,\bar{r})\times N(K), \bar{g}=dr^2+\lambda(r)^2 g_N)$ be a warped product  space satisfying (C1),(C2),(C4), or one of the space forms $\mathbb{R}^n,$ $ \mathbb{S}^n_{+}$,  $\mathbb{H}^n$. Let $\Sigma$ be a compact hypersurface embedded in $(M^n,\bar g)$ with positive mean curvature $H_1$, then
$$\int_{\Sigma}\<X,\nu\>d\mu\leq \int_{\Sigma} \frac{V}{H_1}d\mu.$$
Moreover, equality holds if and only if $\Sigma$ is totally umbilical.
\end{prop}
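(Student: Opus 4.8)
The plan is to reduce the inequality to a Heintze--Karcher estimate on the domain enclosed by $\Sigma$ and then run a Reilly-type argument in the presence of the weight $V$. Since $\Sigma$ is compact and embedded, it bounds a compact region $\Omega$, and I orient $\nu$ as the outward unit normal of $\partial\Omega=\Sigma$. The first observation is that the left-hand side is a pure volume integral: because $\bar\nabla X=V\bar g$ by \eqref{X}, we have $\mathrm{div}\,X=\mathrm{tr}(V\bar g)=nV$, so the divergence theorem gives $\int_\Sigma\langle X,\nu\rangle\,d\mu=n\int_\Omega V\,dv$. Hence the claim is equivalent to the weighted Heintze--Karcher inequality
\[
n\int_\Omega V\,dv \;\le\; \int_\Sigma \frac{V}{H_1}\,d\mu .
\]

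To prove this I would solve an auxiliary elliptic boundary value problem on $\Omega$ adapted to the weight $V$: find $f$ with $f|_\Sigma=0$ and $\mathrm{div}(V\bar\nabla f)=cV$ (equivalently, the drift-Laplacian of $f$ is constant), the constant $c$ normalized so that $\int_\Sigma V f_\nu\,d\mu=n\int_\Omega V\,dv$. I would then apply the weighted (generalized) Reilly formula
\[
\int_\Omega V\big[(\bar\Delta f)^2-|\bar\nabla^2 f|^2\big]\,dv-\int_\Omega P(\bar\nabla f,\bar\nabla f)\,dv=(\text{boundary term}),
\]
where $P:=V\,\overline{Ric}-\bar\nabla^2 V+(\bar\Delta V)\,\bar g$ and the boundary term, since $f\equiv 0$ on $\Sigma$, collapses to a multiple of $\int_\Sigma V H_1 f_\nu^2\,d\mu$ (the tangential contributions dropping out). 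Bounding the Hessian from below by its trace part, $|\bar\nabla^2 f|^2\ge (\bar\Delta f)^2/n$, controls the first bulk integral; provided the second bulk integral $\int_\Omega P(\bar\nabla f,\bar\nabla f)$ has the right sign, this yields an upper bound on $\int_\Sigma V H_1 f_\nu^2$. Finally, the Cauchy--Schwarz inequality on the boundary,
\[
\Big(\int_\Sigma V f_\nu\,d\mu\Big)^2\le \int_\Sigma V H_1 f_\nu^2\,d\mu\cdot\int_\Sigma \frac{V}{H_1}\,d\mu,
\]
together with the normalization $\int_\Sigma V f_\nu\,d\mu=n\int_\Omega V\,dv$, produces the desired estimate.

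The main obstacle is controlling the bulk tensor $P=V\,\overline{Ric}-\bar\nabla^2 V+(\bar\Delta V)\bar g$. Using the explicit Ricci tensor \eqref{Ric} together with $\bar\nabla^2 V=\lambda'''\,dr\otimes dr+\tfrac{\lambda'\lambda''}{\lambda}(\bar g-dr\otimes dr)$ (from $V=\lambda'$), I would verify that the contraction of $P$ against $\bar\nabla f\otimes\bar\nabla f$ carries the correct sign; this is precisely where condition (C2) (the radial pinching of the Ricci curvature) and the monotonicity part of (C4) (non-decreasing scalar curvature) enter, which is why (C4) is flagged as essential here. For the space-form cases $\mathbb{R}^n,\mathbb{S}^n_+,\mathbb{H}^n$ the ambient metric is Einstein and $\bar\nabla^2 V$ is proportional to $\bar g$, so $P$ collapses to a multiple of $\bar g$ and the sign is immediate --- this is Ros's original argument. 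The equality discussion is the last step: equality forces $|\bar\nabla^2 f|^2=(\bar\Delta f)^2/n$ pointwise, i.e. $\bar\nabla^2 f=\tfrac{\bar\Delta f}{n}\bar g$, so the level sets of $f$ are totally umbilic, and pushing this to the boundary level set $\{f=0\}=\Sigma$ shows $\Sigma$ is totally umbilic; conversely slices realize equality. An alternative route, which is Brendle's original one, avoids the PDE entirely and instead parametrizes $\Omega$ by the inward normal exponential map, estimating the Jacobian along each normal geodesic via the Riccati equation and an arithmetic--geometric mean inequality; there the same conditions (C2) and (C4) reappear through the monotonicity of an auxiliary quantity along the geodesics, which is the corresponding hard step.
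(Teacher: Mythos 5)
You should first be aware that the paper does not prove Proposition \ref{B} at all: it is imported with a citation, from Ros \cite{Ros} (and Montiel--Ros \cite{MR}) for the space forms and from Brendle \cite{Brendle} for the warped products, so your argument has to stand on its own, and the proof it should be measured against is Brendle's --- which is precisely the normal-exponential-map/Riccati argument that you mention only in your last sentence as an ``alternative route''. Your opening reduction is fine ($\mathrm{div}\,X=nV$ by \eqref{X}, hence $\int_\Sigma\langle X,\nu\rangle d\mu=n\int_\Omega V\,dv$), and you have correctly identified the substatic tensor $P=V\,\mathrm{Ric}-\bar\nabla^2V+\bar\Delta V\,\bar g$ as the quantity whose nonnegativity ought to drive a Reilly-type proof. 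The gap is that the ``weighted (generalized) Reilly formula'' you invoke --- bulk term $\int_\Omega P(\bar\nabla f,\bar\nabla f)$, boundary term a multiple of $\int_\Sigma VH_1f_\nu^2$ when $f|_\Sigma=0$ --- is not a true identity, and the identity that is true has the wrong sign.

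What the $V$-weighted Bochner computation actually yields, for $f|_\Sigma=0$, is
\begin{equation*}
\int_\Omega V\big[(\bar\Delta f)^2-|\bar\nabla^2f|^2\big]\,dv
=\int_\Omega\big(V\,\mathrm{Ric}+\bar\nabla^2V-\bar\Delta V\,\bar g\big)(\bar\nabla f,\bar\nabla f)\,dv
+(n-1)\int_\Sigma VH_1 f_\nu^2\,d\mu ,
\end{equation*}
so the boundary term is as you predicted, but the bulk tensor carries the opposite sign on the derivative-of-$V$ terms. That tensor is not nonnegative in the cases at hand: in $\mathbb{H}^n$ (with $V=\cosh r$) it equals $-2(n-1)V\bar g$, and in the warped products its $(\partial_r,\partial_r)$-component is $-2(n-1)\lambda'\lambda''/\lambda\le 0$ by (C1), (C3). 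Discarding it therefore gives a \emph{lower} bound on $\int_\Sigma VH_1f_\nu^2$, not the upper bound your Cauchy--Schwarz step needs, so the scheme collapses exactly at ``provided the second bulk integral has the right sign''. You can check directly that your version of the formula is false: in $\mathbb{H}^n$ take $\Omega=B_R$ and $f=\frac{1}{n}(\cosh r-\cosh R)$; then $P\equiv0$ (this is a static triple), yet $\int_\Omega V[(\bar\Delta f)^2-|\bar\nabla^2f|^2]$ is strictly smaller than $(n-1)\int_\Sigma VH_1f_\nu^2$. Producing a Reilly-type identity whose interior term really is $P$ is the whole difficulty; it was achieved only later by Qiu and Xia (IMRN 2015), via an identity with a different structure (adapted to the operator $\mathrm{div}(V\bar\nabla\,\cdot)$, with additional terms), and it postdates both \cite{Brendle} and this paper. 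Two related slips: your auxiliary PDE prescribes $\mathrm{div}(V\bar\nabla f)=nV$, while the trace inequality $|\bar\nabla^2f|^2\ge(\bar\Delta f)^2/n$ involves $\bar\Delta f$, which is then non-constant, so $\int_\Omega V(\bar\Delta f)^2$ is not controlled by your normalization (and the alternative choice $\bar\Delta f\equiv\mathrm{const}$ produces the unsigned flux term $-\int_\Omega f\,\bar\Delta V$, which has the unfavorable sign in $\mathbb{S}^n_+$); and attributing the space-form case to ``Ros's original argument'' is inaccurate, since Ros's Reilly proof concerns $\mathbb{R}^n$ where $V\equiv1$, whereas for $\mathbb{H}^n$ and $\mathbb{S}^n_+$ Montiel--Ros \cite{MR} switched to the exponential-map parametrization for exactly the sign reason above. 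Finally, note that for these warped products $P\ge0$ is equivalent to the monotonicity half of (C4) alone --- the radial component of $P$ vanishes identically and its tangential component equals $\frac{\lambda}{2}\frac{d}{dr}\big[2\frac{\lambda''}{\lambda}-(n-2)\frac{K-\lambda'^2}{\lambda^2}\big]$ --- so (C2) is not what makes $P\ge0$; if you want a proof you can actually complete, the realistic path is Brendle's Montiel--Ros-type argument, in which the entire content is the monotonicity along inward normal geodesics that your sketch names but does not prove.
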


\section{Rigidity for curvature quotients and combinations}

In this section, we are ready to prove our main theorems. We start with the one on curvature quotients.
This will be proved by making use of Lemma \ref{lem} and Proposition \ref{Minkowskiformula}.

\vspace{2mm}
\noindent\textit{Proof of Theorem \ref{main thm}:}
We first claim that $\l\in \Gamma^+_k$. In fact, condition (C1) implies that $\S$ has at least one elliptic point where all the principal curvatures are positive. This can be shown by a standard argument using maximum principle. Hence the constant $c$ should be positive. Moreover, since $H_l$ is nowhere vanishing on $\S$, it must be positive. In turn, $H_k=cH_l$ is positive. From the result of G\r{a}rding \cite{Garding}, we know that $H_j>0$ everywhere on $\S$ for  $1\leq j\leq k$.

For $1\leq l<k\leq n-1$, Proposition \ref{Minkowskiformula} gives the following two formulae:
\begin{eqnarray}\label{mink}
\int_\S \<X,\nu\>H_kd\mu&=&\int_\S VH_{k-1}d\mu+\frac{k\!-\!1}{\binom{n-2}{k-2}}\int_\S \sum_{i,j=1}^{n-1}A_{ij}(T_{k-2})_{ij}d\mu,\\
\int_\S \<X,\nu\>H_l d\mu&=&\int_\S VH_{l-1}d\mu+\frac{l\!-\!1}{\binom{n-2}{l-2}}\int_\S \sum_{i,j=1}^{n-1}A_{ij}(T_{l-2})_{ij}d\mu.\label{minl}
\end{eqnarray}

Since $H_k=cH_l$, we deduce from \eqref{mink},\eqref{minl} together with (\ref{NM}), (\ref{Aj}) that
\begin{eqnarray}\label{eq3_thm1.1}
0&=&\int_\S \<X,\nu\>(H_k-cH_l)d\mu\nonumber\\&=&\int_\S V\left(H_{k-1}-cH_{l-1}\right)d\mu+\int_\S\sum_{i,j=1}^{n-1} A_{ij}\left(\frac{k\!-\!1}{\binom{n-2}{k-2}}(T_{k-2})_{ij}-c\frac{l\!-\!1}{\binom{n-2}{l-2}}(T_{l-2})_{ij}\right)d\mu.
\end{eqnarray}
Without loss of generality, one may assume that the second fundamental form $h_{ij}$ is diagonal at the point under computation.
At this point, we have
\begin{eqnarray}\label{point1}
&&\sum_{i,j=1}^{n-1}A_{ij}\left(\frac{k\!-\!1}{\binom{n-2}{k-2}}(T_{k-2})_{ij}-c\frac{l\!-\!1}{\binom{n-2}{l-2}}(T_{l-2})_{ij}\right)\nonumber\\
&=&\sum_{j=1}^{n-1}A_{jj}\left((k-1)H_{k-2}(\L_j)-c(l-1)H_{l-2}(\L_j)\right).
\end{eqnarray}

We know  from the Newton-Maclaurin inequality (\ref{NM}) that
\beq\label{eq0110}
\frac{H_{k-1}}{H_{l-1}}\geq \frac{H_k}{H_l}=c.
\eeq
On the other hand, note the simple fact
$$\sigma_k=\lambda_j\sigma_{k-1}(\L_j)+\s_{k}(\L_j),$$
which is equivalent to
\beq\label{HLambda}
H_k=\frac{k}{n-1}\l_jH_{k-1}(\L_j)+\frac{n-1-k}{n-1}H_k(\L_j).
\eeq
Applying (\ref{HLambda}), for any $j=1,\cdots, n-1$, we find
\begin{eqnarray}\label{HH}
&&(k-1)H_{k-2}(\L_j)H_{l-1}-(l-1)H_{k-1}H_{l-2}(\L_j)\nonumber\\
&=&\frac{(k-1)(n-l)}{n-1}H_{k-2}(\L_j)H_{l-1}(\L_j)-\frac{(l-1)(n-k)}{n-1}H_{l-2}(\L_j)H_{k-1}(\L_j)\nonumber\\
&=&(k\!-\!l)H_{k-2}(\L_j)H_{l-1}(\L_j)\!+\!\frac{(l\!-\!1)(n\!-\!k)}{n-1}\left(H_{k-2}(\L_j)H_{l-1}(\L_j)\!-\!H_{l-2}(\L_j)H_{k-1}(\L_j)\right)\nonumber\\
&>&0.
\end{eqnarray}

Therefore, by \eqref{Aj}, \eqref{point1}, \eqref{eq0110} and \eqref{HH}, the integrand in the right hand side of (\ref{eq3_thm1.1}) is non-negative. It follows that the equality holds in (\ref{eq0110}), which implies that $\S$ is totally umbilical. Moreover, thanks to \eqref{HH}, we have
\begin{eqnarray}\label{AA}
A_{jj}\equiv 0,\quad \forall\; 1\leq j\leq n-1.
\end{eqnarray}
 Together with condition (C2), \eqref{AA} implies that the normal $\nu$ is parallel or pendicular to $\frac{\p}{\p r}$ everywhere on $\S$. However, there is at least one point on $\S$ where $\nu$ is parallel to $\frac{\p}{\p r}$. Therefore, $\nu$ is parallel to $\frac{\p}{\p r}$ for all points in $\S$, which means that  $\S$ is a slice $\{r\}\times  N(K)$. We complete the proof.\qed

\

Next we show the rigidity result for constant linear combinations of mean curvatures in the warped product manifolds. This argument basically follows from the above one except that one needs pay more attention to the use of the Newton-Maclaurin inequality at the first step.

\vspace{2mm}
\noindent {\it Proof of  Theorem \ref{mainthm0}:}

(i) By the existence of an elliptic point and non-vanishing of at least one coefficient, we know $\sum_{i=1}^{l-1}a_i H_i >0.$
Since $\Sigma$ is k-convex, we recall from (\ref{NM}) that
\begin{eqnarray}\label{ab1}
H_i H_{j-1}\geq H_{i-1}H_j,\; 1\leq i<j\leq k,
\end{eqnarray}
where all equalities hold if and only if $\S$ is umbilical.
Multiplying \eqref{ab1} by $a_i$ and $b_j$  and summing over $i$ and $j$, we get
\begin{eqnarray}\label{ab2}
\sum_{i=1}^{l-1}a_i H_i  \sum_{j=l}^k b_j H_{j-1}\geq \sum_{i=1}^{l-1} a_i H_{i-1} \sum_{j=l}^k  b_jH_j.
\end{eqnarray}
By using the assumption $$\sum_{i=1}^{l-1}a_i H_i=\sum_{j=l}^k  b_jH_j>0,\;\,2\leq l<k\leq n-1,$$ we obtain from \eqref{ab2} that
\begin{eqnarray}\label{ab3}
 \sum_{j=l}^k b_j H_{j-1}\geq \sum_{i=1}^{l-1} a_i H_{i-1}.
\end{eqnarray}

On the other hand, (\ref{NM}) and (\ref{HH}) give
\begin{eqnarray}\label{ab4}
(j-1)H_{j-2}(\L_p) H_i>   (i-1)H_j H_{i-2}(\L_p),\;\, \forall\; 1\leq i<j\leq k, 1\leq p\leq n-1.
\end{eqnarray}
Multiplying \eqref{ab4} by $a_i$ and $b_j$  and summing over $i$ and $j$, we have
\begin{eqnarray*}
 \sum_{j=l}^k (j-1)b_j  H_{j-2}(\L_p) \sum_{i=1}^{l-1} a_i H_i>  \sum_{j=l}^k b_j H_j \sum_{i=1}^{l-1} (i-1)a_i H_{i-2}(\L_p).
\end{eqnarray*}
Hence
\begin{eqnarray}\label{ab5}
 \sum_{j=l}^k (j-1) b_j H_{j-2}(\L_p)> \sum_{i=1}^{l-1} (i-1) a_i H_{i-2}(\L_p),\;\,\forall\;  1\leq p\leq n-1.
\end{eqnarray}
As in the proof of Theorem \ref{main thm}, \eqref{ab5} implies the matrix
\begin{eqnarray}\label{aabb5}
 \left(\sum_{j=l}^{k} \frac{(j-1)}{\binom{n-2}{j-2}} b_j (T_{j-2})_{pq}-\sum_{i=1}^{l-1} \frac{(i-1)}{\binom{n-2}{i-2}} a_i (T_{i-2})_{pq}\right)_{p,q=1}^{n-1}\hbox{ is positive definite.}
\end{eqnarray}

We finally infer from \eqref{mink}, \eqref{minl}  that
\begin{eqnarray}\label{ab6}
0&=&\int_\Sigma (\sum_{j=l}^k b_j H_j-\sum_{i=1}^{l-1} a_i H_i) \<X,\nu\> d\mu=\int_\Sigma (\sum_{j=l}^k b_j H_{j-1}-\sum_{i=1}^{l-1} a_i H_{i-1})V d\mu\nonumber\\&&+\int_\Sigma \sum_{p,q=1}^{n-1} A_{pq}\left(   \sum_{j=l}^{k} \frac{(j-1)}{\binom{n-2}{j-2}} b_j (T_{j-2})_{pq}-\sum_{i=1}^{l-1} \frac{(i-1)}{\binom{n-2}{i-2}} a_i (T_{i-2})_{pq}\right)d\mu\geq 0.
\end{eqnarray}
Here the last inequality follows from (\ref{Aj}), (\ref{point1}) (\ref{ab3}) and (\ref{aabb5}).

\vspace{2mm}
(ii) The proof is essentially the same as above. One only needs to notice the slight difference regarding the value of indices. Proceeding as above, we have

\begin{eqnarray}\label{ab3'}
 \sum_{j=l}^k b_j H_{j+1}\leq \sum_{i=0}^{l-1} a_i H_{i+1},
\end{eqnarray}
and
\begin{eqnarray}\label{ab5'}
 \sum_{j=l}^k j b_j H_{j-1}(\L_p)> \sum_{i=0}^{l-1} i a_i H_{i-1}(\L_p),\;\,\forall\;  1\leq p\leq n-1.
\end{eqnarray}
Applying (\ref{0eq0}) again,
\begin{eqnarray}\label{ab6'}
0&=&\int_\Sigma (\sum_{i=0}^{l-1} a_i H_i-\sum_{j=l}^k b_j H_j)V d\mu=\int_\Sigma (\sum_{i=0}^{l-1} a_i H_{i+1}-\sum_{j=l}^k b_j H_{j+1})\<X,\nu\> d\mu\nonumber\\&&+\int_\Sigma \sum_{p,q=1}^{n-1} A_{pq}\left(  \sum_{j=l}^k \frac{j b_j}{\binom{n-2}{j-1}} (T_{j-1})_{pq}- \sum_{i=0}^{l-1} \frac{i a_i} {\binom{n-2}{i-1}}(T_{i-1})_{pq}\right)d\mu\geq 0.
\end{eqnarray}
Here the last inequality follows from   (\ref{Aj}), (\ref{point1}), (\ref{ab3'}) and (\ref{ab5'}).

We finish the proof by examining the equality in both cases as in the proof  in Theorem \ref{main thm}.
\qed

\

As remarked in the introduction, for the same rigidity problem in the space forms, the star-shapedness is not necessary. That is, we have the following theorem.
\begin{theo}\label{mainthm00}
Let $0\leq k\leq n-1$ be an integer and $\Sigma^{n-1}$ be a closed, k-convex hypersurface in $\mathbb{R}^n (\mathbb{S}_+^n, \mathbb{H}^n,\hbox{ resp.})$.
If either of the following case holds:
\begin{itemize}
\item[(i)] $2\leq l<k\leq n-1$ and there are nonnegative constants $\{a_i\}_{i=1}^{l-1}$ and $\{b_j\}_{j=l}^k$,  at least one of them not vanishing, such that $$\sum_{i=1}^{l-1}a_iH_i=\sum_{j=l}^k b_j H_j;$$
\item[(ii)]  there are nonnegative constants $a_0$ and $\{b_j\}_{j=1}^k$, at least one of them not vanishing, such that $$a_0=\sum_{j=1}^{k}b_jH_j;$$
\end{itemize}
then $\Sigma$ is a geodesic hypersphere.
\end{theo}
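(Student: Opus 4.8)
The plan is to re-run the Reilly--Hsiung integral scheme used for Theorem \ref{mainthm0}, exploiting the single feature that singles out the space forms among all warped products: they are Einstein, $Ric=(n-1)\bar K\,\bar g$, so $Ric(e_j,\nu)=(n-1)\bar K\langle e_j,\nu\rangle=0$ and the quantity $A_{ij}$ of \eqref{Aij} vanishes identically. Hence the correction terms in Proposition \ref{Minkowskiformula} disappear and the Minkowski identities $\int_\Sigma \langle X,\nu\rangle H_m\,d\mu=\int_\Sigma VH_{m-1}\,d\mu$ hold for \emph{every} closed hypersurface. Since star-shapedness entered Theorem \ref{mainthm0} only to guarantee $A_{jj}\ge 0$, removing it now costs nothing, and I would keep $V>0$ (condition (C1)) and the Newton--Maclaurin inequalities of Lemma \ref{lem} as the remaining tools.

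For case (i) I would argue exactly as in the proof of Theorem \ref{mainthm0}(i) with all $A$-terms deleted. Multiplying the pointwise identity $\sum_{j=l}^k b_jH_j-\sum_{i=1}^{l-1}a_iH_i\equiv 0$ by $\langle X,\nu\rangle$ and integrating, the Minkowski identities turn it into $\int_\Sigma V\bigl(\sum_{j=l}^k b_jH_{j-1}-\sum_{i=1}^{l-1}a_iH_{i-1}\bigr)\,d\mu=0$. As $V>0$, it remains to show the second factor is pointwise nonnegative; this is the Newton--Maclaurin step \eqref{ab3}, gotten by multiplying \eqref{NM} by $a_ib_j\ge 0$, summing, and dividing by the common value $f:=\sum_i a_iH_i=\sum_j b_jH_j$. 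Equality in \eqref{NM} then forces $\Sigma$ totally umbilic, and this needs no embeddedness.

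Case (ii) resists this, since with $l=1$ the analogous computation only gives $\int_\Sigma\langle X,\nu\rangle\bigl(a_0H_1-\sum_j b_jH_{j+1}\bigr)\,d\mu=0$ and the sign of $\langle X,\nu\rangle$ is now uncontrolled. Here I would call on the Heintze--Karcher inequality $\int_\Sigma\langle X,\nu\rangle\,d\mu\le\int_\Sigma V/H_1\,d\mu$ of Proposition \ref{B}. Summing the Minkowski identities gives $a_0\int_\Sigma\langle X,\nu\rangle\,d\mu=\int_\Sigma V\sum_j b_jH_{j-1}\,d\mu$; inserting Heintze--Karcher, using $a_0=\sum_j b_jH_j$ under the integral and the inequality $H_{j-1}H_1\ge H_j$ from \eqref{NM}, yields $\int_\Sigma V\sum_j b_j\bigl(H_{j-1}-H_j/H_1\bigr)\,d\mu\le 0$ with a nonnegative integrand, hence equality and umbilicity. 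It is precisely this appeal to Proposition \ref{B} that brings embeddedness into case (ii) (and into the classical $H_1\equiv c$ subcase, where immersed counterexamples such as the Wente tori otherwise exist). The proof then closes by the standard fact that a closed totally umbilic hypersurface of a space form is a geodesic sphere.

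The main obstacle, in both cases, is not the integral identities but the \emph{strict} positivity needed to divide: $k$-convexity only gives the relevant quantities $\ge 0$. For case (ii) this is clean — at a point where $H_1=0$, $k$-convexity with $k\ge 2$ forces $\sigma_1=0$ and $\sigma_2\ge 0$, hence $\sum_i\lambda_i^2=\sigma_1^2-2\sigma_2\le 0$, so all principal curvatures, and thus all $H_j$, vanish there; then the constant $a_0=\sum_j b_jH_j$ would be $0$, contradicting $\sum_j b_jH_j>0$ at an elliptic point (which exists by compactness). For case (i) the same degeneration is the delicate point: a zero of $f$ puts $\lambda$ on $\partial\Gamma_k^+$ with $H_{i_0}=\cdots=H_k=0$ for the lowest active index $i_0$, which need \emph{not} force $H_1=0$, so the integrand can turn negative there. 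I expect the real work to lie in showing $f>0$ everywhere — equivalently $\lambda\in\Gamma_k^+$ on all of $\Sigma$ — by a connectedness/maximum-principle argument (the connected image $\lambda(\Sigma)$ meets the open cone $\Gamma_k^+$ at the elliptic point, and one must use the constraint to keep it from reaching $\partial\Gamma_k^+$), rather than in the integral estimates themselves.
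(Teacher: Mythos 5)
Your proposal is, in substance, the paper's own proof. For case (ii) the paper argues exactly as you do: the space-form Minkowski identity \eqref{Minkowski_Identity} (the paper derives it from the divergence-free Newton tensors rather than from $A_{ij}\equiv 0$, which is the same fact), positivity of $a_0$ and of $H_1$ from an elliptic point (the paper uses $\sum_j b_jH_1^j\ge\sum_j b_jH_j=a_0>0$ where you use $\sigma_1^2-2\sigma_2\le 0$; both work), and then the chain combining the Newton--Maclaurin inequality $H_{j-1}H_1\ge H_j$ with the Heintze--Karcher inequality of Proposition \ref{B}, equality forcing umbilicity. Embeddedness indeed enters only there. For case (i) your integral computation is also the paper's: it is \eqref{ab6} with the $A$-terms deleted, reduced to the pointwise inequality \eqref{ab3}.

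The only place where you and the paper part ways is the point you flag at the end: strict positivity of the common value $f=\sum_i a_iH_i=\sum_j b_jH_j$, which is needed to divide \eqref{ab2} and obtain \eqref{ab3} pointwise. The paper does not carry out the connectedness or maximum-principle argument you anticipate; it disposes of this in one sentence in the proof of Theorem \ref{mainthm0}(i) (``by the existence of an elliptic point and non-vanishing of at least one coefficient, we know $\sum_i a_iH_i>0$''), and the proof of Theorem \ref{mainthm00}(i) simply cites \eqref{ab3}. Your reservation about that step is well founded: the elliptic point gives $f>0$ at one point, and this propagates in case (ii) only because there the common value is the constant $a_0$; in case (i) the function $f$ is not constant, and $k$-convexity alone does not exclude zeros of $f$ compatible with the constraint (for instance, if $a_1=0<a_2$, then at a point with $\lambda=(1,0,\dots,0)\in\overline{\Gamma_k^+}$ both sides of the constraint vanish, while the integrand of \eqref{ab6} there equals $-Va_2H_1<0$). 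So the ``real work'' you defer is not done in the paper either; apart from making that caveat explicit, your argument coincides with the published one.
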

For the proof of Theorem \ref{mainthm00}, we still apply the integral technique following \cite{Reilly}. We remark that it could be also obtained by using the classical Alexandrov's reflection method as in \cite{Kor}.

For the space forms $\mathbb{R}^n$ ($\mathbb{S}_+^n$, $\mathbb{H}^n$ resp.), the conformal vector field $X=r\frac{\p}{\p r}$ ($\sin r\frac{\p}{\p r}$, $\sinh r\frac{\p}{\p r}$ resp.) and $V=1$ ($\cos r$, $\cosh r$ resp.). It follows from the Codazzi equation that the Newton tensor $T_k$ is divergence-free with the induced metric on $\S$, i.e., $\nabla_i T_k^{ij}=0$. Thus (\ref{0eq2}) implies
\begin{equation}\label{Minkowski}
\nabla_j(T_{k-1}^{ij}X_i)=-k\<X,\nu\>\sigma_{k}+\left(n-k\right) V\sigma_{k-1}.
\end{equation}
Integrating above equation and noting (\ref{Hk}), we have the Minkowski formula in the space forms
\begin{equation}\label{Minkowski_Identity}
\int_{\Sigma} \<X,\nu\> H_{k}d\mu= \int_{\Sigma}V H_{k-1} d\mu.
\end{equation}

\vspace{6mm}
\noindent\textit{Proof of Theorem \ref{mainthm00}:}
(i)It follows from \eqref{Minkowski_Identity} and (\ref{NM}) that
\begin{eqnarray}
0=\int_\S \<X,\nu\>(\sum_{i=1}^{l-1}a_iH_i- \sum_{j=l}^k b_j H_j)d\mu= \int_\S V(\sum_{i=1}^{l-1}a_iH_{i-1}- \sum_{j=l}^k b_j H_{j-1})d\mu\leq 0.
\end{eqnarray}
The last inequality follows from \eqref{ab3}, where equality holds if and only if $\S$ is a geodesic hypersphere.

(ii) From the existence of an elliptic point and non-vanishing of at least one coefficient, we have $\sum_{j=1}^k b_jH_j>0$. Since $\Sigma$ is $k$-convex, $$\sum_{j=1}^k b_j H_1^j\geq \sum_{j=1}^k b_jH_j>0.$$ Hence $H_1$ cannot vanish at any points, which implies that $H_1>0$. Making use of (\ref{Minkowski_Identity}) and (\ref{NM}), we derive
\begin{eqnarray*}
a_0\int_\S \<X,\nu\> d\mu&=&\int_\S \<X,\nu\>  \left(\sum_{j=1}^{k}b_jH_j\right)d\mu = \int_\S V\left(\sum_{j=1}^{k}b_jH_{j-1}\right)\frac{H_1}{H_1}d\mu\\&\geq  &\int_\S V\left(\sum_{j=1}^{k}b_jH_j\right)\frac{1}{H_1}d\mu=a_0\int_\S \frac{V}{H_1}d\mu\\
&\geq& a_0\int_\S \<X,\nu\> d\mu,
\end{eqnarray*}
where in the last inequality we used Proposition \ref{B}.
Therefore, the equality in both case yields that $\S$ is a geodesic hypersphere.
\qed

\

Using a similar argument and taking Propositions \ref{prop1} and \ref{B} into account, we now prove the rigidity for the weighted curvature functions.

\vspace{2mm}
\noindent{\it Proof of  Theorem \ref{mainthm2}:}
(i)
First the existence of an elliptic point implies that $H_k$ is positive everywhere on $\S$. Then we know that $H_j>0$ and $H_{j}(\L_p)\geq 0,\;\,\forall\;1\leq p\leq n-1$ for $1\leq j\leq k$.

Thus (\ref{eq1}) implies
\begin{equation}\label{eq3}
\int_{\Sigma}\<X,\nu\>VH_kd\mu\geq \int_{\Sigma}V^2H_{k-1}d\mu.
\end{equation}
Noticing from (\ref{N-M}) that
$$H_{k-1}\geq H_k^{\frac{k-1}{k}},$$
we compute
\begin{eqnarray*}
V H_k\int_{\Sigma} \<X,\nu\> d\mu&=&\int_{\Sigma} \<X,\nu\> VH_kd\mu\geq \int_{\Sigma}V^2H_{k-1}d\mu\\
&\geq& \int_{\Sigma} V^2 H_k^{\frac{k-1}{k}}d\mu=(VH_k)^{\frac{k-1}{k}}\int_{\Sigma}V^{1+\frac 1k}d\mu,
\end{eqnarray*}
which yields
\begin{equation}\label{eq5}
\int_{\Sigma}\<X,\nu\>d\mu\geq (VH_k)^{-\frac 1k}\int_{\Sigma}V^{1+\frac 1k}d\mu,
\end{equation}
and  equality holds if and only $\Sigma$ is a  geodesic sphere.

On the other hand, by Proposition \ref{B} and (\ref{N-M}) we derive that
\begin{equation}\label{eq6}
\int_{\Sigma}\<X,\nu\>d\mu\leq \int_{\Sigma} \frac{V}{H_1}d\mu\leq \int_{\Sigma}\frac{V}{H_k^{\frac 1k}}d\mu=(VH_k)^{-\frac 1k}\int_{\Sigma} V^{1+\frac 1k}d\mu.
\end{equation}
Finally combining (\ref{eq5}) and (\ref{eq6}) together, we complete the proof.

\vspace{2mm}
(ii)
As in the proof of Theorem \ref{mainthm0}, one can obtain the following two inequalities: \begin{eqnarray}\label{ab33}
 \sum_{j=l}^k b_j \left(VH_{j-1}\right)\geq \sum_{i=1}^{l-1} a_i H_{i-1},
\end{eqnarray}
and
\begin{eqnarray}\label{ab55}
 \sum_{j=l}^k (j-1) b_j V H_{j-2}(\L_p)> \sum_{i=1}^{l-1} (i-1) a_i H_{i-2}(\L_p),\forall 1\leq p\leq n-1.
\end{eqnarray}

For $1\leq l<k\leq n-1$, it follows from Proposition  \ref{Minkowskiformula} and Proposition \ref{prop1} that
\begin{eqnarray}\label{mink'}
\int_\S \<X,\nu\>VH_kd\mu&\geq&\int_\S V^2H_{k-1}d\mu+\frac{k-1}{\binom{n-2}{k-2}}\int_\S \sum_{p,q=1}^{n-1}VA_{pq} (T_{k-2})_{pq}d\mu,\\
\int_\S \<X,\nu\>H_l d\mu&=&\int_\S VH_{l-1}d\mu+\frac{l-1}{\binom{n-2}{l-2}}\int_\S \sum_{p,q=1}^{n-1}VA_{pq}(T_{l-2})_{pq}d\mu.\label{minl'}
\end{eqnarray}

We then derive from above that
\begin{eqnarray}
0&=&\int_\Sigma (\sum_{j=l}^k b_j VH_j-\sum_{i=1}^{l-1} a_i H_i) \<X,\nu\> d\mu=\int_\Sigma V(\sum_{j=l}^k b_j V H_{j-1}-\sum_{i=1}^{l-1} a_i H_{i-1}) d\mu\\&&+\int_\Sigma \sum_{p,q=1}^{n-1} A_{pq}\left(   \sum_{j=l}^{k} \frac{(j-1)}{\binom{n-2}{j-2}} b_j V(T_{j-2})_{pq}-\sum_{i=1}^{l-1} \frac{(i-1)}{\binom{n-2}{i-2}} a_i (T_{i-2})_{pq}\right)d\mu\geq 0.
\end{eqnarray}
Here the last inequality follows from (\ref{Aj}), (\ref{point1}), (\ref{ab33}) and (\ref{ab55}). We finish the proof by examining the equality case as before.

\vspace{2mm}
(iii) The proof is similar with above with some necessary adaption as the one  did in the proof of Theorem \ref{mainthm0} (ii).
\qed

\

\section{rigidity for $L_k$ curvatures and their combinations}

Unlike the mean curvatures $H_k$, the Gauss-Bonnet curvatures $L_k$, and hence $\int_\Sigma L_k d\mu$ are intrinsic geometric quantities, which depend only on the induced metric  on $\Sigma$ and
are independent of the embeddings of $\Sigma$.
The  functionals $\int_\Sigma L_k$ are new geometric quantities for  the study of the integral geometry in the space forms.

We first infer a relation between $L_k$ and $H_k$.
\begin{lemm}
\label{lem0} For a hypersurface  $(\Sigma,g)$ in the space forms $\H^n$ ($\R^n$, $\mathbb{S}^n$, resp.) with constant curvature $\epsilon=-1 (0,1, resp.)$, its Gauss-Bonnet curvature $L_k$ with respect to $g$ can be expressed by higher order mean curvatures
\begin{eqnarray}\label{Lk1}
L_k&=&\binom{n-1}{2k}(2k)!\sum_{i=0}^k\binom{k}{i}\epsilon^iH_{2k-2i}.
\end{eqnarray}
\end{lemm}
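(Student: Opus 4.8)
The plan is to substitute the Gauss equation into the intrinsic definition (\ref{Lk}) of $L_k$ and then carry out the resulting contraction of generalized Kronecker deltas. Since $\Sigma^{n-1}$ is immersed in a space form of constant sectional curvature $\epsilon$, the Gauss equation expresses the curvature $4$-tensor of the induced metric $g$ purely in terms of $\epsilon$ and the second fundamental form: in local orthonormal coordinates,
\[
R_{ab}{}^{cd}=\epsilon\bigl(\delta_a^c\delta_b^d-\delta_a^d\delta_b^c\bigr)+\bigl(h_a^c h_b^d-h_a^d h_b^c\bigr).
\]
Feeding this into (\ref{Lk}), each of the $k$ curvature factors splits into an ``$\epsilon$-part'' and an ``$h$-part''. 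The key simplification is that the generalized Kronecker delta $\delta^{i_1\cdots i_{2k}}_{j_1\cdots j_{2k}}$ is totally antisymmetric in its lower indices, so swapping $j_{2a-1}\leftrightarrow j_{2a}$ inside any single factor merely produces a sign that cancels the internal minus sign. Hence, under the contraction, each factor $R_{i_{2a-1}i_{2a}}{}^{j_{2a-1}j_{2a}}$ may be replaced by $2\epsilon\,\delta_{i_{2a-1}}^{j_{2a-1}}\delta_{i_{2a}}^{j_{2a}}+2\,h_{i_{2a-1}}^{j_{2a-1}}h_{i_{2a}}^{j_{2a}}$.

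Next I would expand the product of these $k$ ``binomial'' factors and collect terms by the number $i$ of factors that carry the $\epsilon$-part. By relabeling the (summed) indices and again using the total antisymmetry of the Kronecker delta, all $\binom{k}{i}$ choices of which factors are $\epsilon$-parts contribute equally, and every factor furnishes a $2$, producing an overall $2^k$ that cancels the prefactor $2^{-k}$ in (\ref{Lk}). This reduces $L_k$ to
\[
L_k=\sum_{i=0}^{k}\binom{k}{i}\epsilon^i\,
\delta^{i_1\cdots i_{2k}}_{j_1\cdots j_{2k}}
\Bigl(\textstyle\prod_{a=1}^{i}\delta_{i_{2a-1}}^{j_{2a-1}}\delta_{i_{2a}}^{j_{2a}}\Bigr)
\Bigl(\textstyle\prod_{a=i+1}^{k}h_{i_{2a-1}}^{j_{2a-1}}h_{i_{2a}}^{j_{2a}}\Bigr).
\]

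Then I would evaluate the contraction for each fixed $i$ in two stages. First, the $2i$ Kronecker contractions are carried out by the standard trace identity for the generalized delta in dimension $N=n-1$, namely taking $r$ traces of an order-$p$ delta gives the factor $\tfrac{(N-p+r)!}{(N-p)!}$; with $p=2k$ and $r=2i$ this yields $\tfrac{(n-1-2k+2i)!}{(n-1-2k)!}$ times a generalized delta of order $2(k-i)$. Second, that remaining delta contracted against the $2(k-i)$ copies of $h_a^b$ is exactly $(2k-2i)!\,\sigma_{2k-2i}$ by (\ref{sigmak}). Converting $\sigma_{2k-2i}=\binom{n-1}{2k-2i}H_{2k-2i}$ via (\ref{Hk}), the combinatorial coefficient for each $i$ becomes
\[
\frac{(n-1-2k+2i)!}{(n-1-2k)!}\,(2k-2i)!\,\binom{n-1}{2k-2i}
=\frac{(n-1)!}{(n-1-2k)!}=\binom{n-1}{2k}(2k)!,
\]
which is independent of $i$ and therefore factors out of the sum, giving precisely (\ref{Lk1}).

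The conceptual input (the Gauss equation) is immediate, so the main obstacle is the combinatorial bookkeeping: correctly tracking the powers of $2$, verifying that the antisymmetry produces the binomial weight $\binom{k}{i}$ rather than some other count, and confirming that the product of factorials and binomial coefficients collapses to the uniform factor $\binom{n-1}{2k}(2k)!$. Each of these is routine but must be handled with care, since an error in any one of the normalizations would spoil the clean form of (\ref{Lk1}).
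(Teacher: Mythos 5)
Your proposal is correct and follows essentially the same route as the paper: substitute the Gauss equation into the definition of $L_k$, use the antisymmetry of the generalized Kronecker delta to drop the internal minus signs (producing the $2^k$ that cancels the $2^{-k}$ prefactor), expand binomially to get the weight $\binom{k}{i}\epsilon^i$, contract the $2i$ pure Kronecker factors via the trace identity, identify the remaining contraction with $(2k-2i)!\,\sigma_{2k-2i}$, and observe that the combinatorial coefficient collapses to the $i$-independent constant $\binom{n-1}{2k}(2k)!$. Your bookkeeping of the factorials is accurate (indeed, your final display avoids the evident typo $\binom{n-1}{i}$ in the last line of the paper's computation, which should read $\binom{k}{i}$), so nothing further is needed.
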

\begin {proof}
 First by the Gauss formula $${R_{ij}}^{kl}=({h_i}^k{h_j}^l-{h_i}^l{h_j}^k)+\epsilon({\delta_i}^k{\delta_j}^l-{\delta_i}^l{\delta_j}^k),$$ where ${h_i}^j:=g^{ik}h_{kj}$ and $h$ is the second fundamental form.
Then substituting the Gauss formula above into (\ref{Lk}) and noting (\ref{sigmak}), a straightforward calculation leads to,
\begin{eqnarray*}
L_k&=&\frac{1}{2^k}\d^{i_1i_2\cdots i_{2k-1}i_{2k}}
_{j_1j_2\cdots j_{2k-1}j_{2k}}{R_{i_1i_2}}^{j_1j_2}\cdots
{R_{i_{2k-1}i_{2k}}}^{j_{2k-1}j_{2k}}\\
&=&\d^{i_1i_2\cdots i_{2k-1}i_{2k}}
_{j_1j_2\cdots j_{2k-1}j_{2k}}({h_{i_1}}^{j_1}{h_{i_2}}^{j_2}+\epsilon{\delta_{i_1}}^{j_1}{\delta_{i_2}}^{j_2})\cdots ({h_{i_{2k-1}}}^{j_{2k-1}}{h_{i_{2k}}}^{j_{2k}}+\epsilon{\delta_{i_{2k-1}}}^{j_{2k-1}}{\delta_{i_{2k}}}^{j_{2k}})\\
&=&\sum_{i=0}^k \binom{k}{i} \epsilon^i (n-2k)(n-2k+1)\cdots(n-1-2k+2i)\big((2k-2i)!\s_{2k-2i}\big)\\
&=&\binom{n-1}{2k}(2k)!\sum_{i=0}^k\binom{n-1}{i} \epsilon^iH_{2k-2i}.
\end{eqnarray*}
Here in the second equality we used the symmetry of generalized Kronecker delta and  in the third equality we used (\ref{sigmak}) and the basic property of generalized Kronecker delta
\beq\label{Kroneckerpro.}
\d^{i_1i_2\cdots i_{p-1}i_{p}}
_{j_1j_2\cdots j_{p-1}j_{p}}{\d_{i_1}}^{j_1}=(n-p)\d^{i_2i_3\cdots i_p}
_{j_2j_3\cdots j_p}.
\eeq
\end{proof}

Motivated by the expression (\ref{Lk1}), we introduce the following notations,
\begin{equation}\label{tildeL1}
\widetilde{L}_{k}:=\frac{L_k}{\binom{n-1}{2k}(2k!)},\qquad\widetilde{N}_{k}:=\frac{N_k}{\binom{n-1}{2k}(2k!)},
\end{equation}
where $$N_k:=\binom{n-1}{2k}(2k)!\sum_{i=0}^k\binom{k}{i}\epsilon^iH_{2k-2i+1}.$$

Since for the sphere $\mathbb{S}^n$, $L_k$ can be expressed as linear combinations of $H_k$ with nonnegative coefficients in the formula \eqref{Lk1}, thus rigidity for $L_k$ is an immediate consequence of Theorem \ref{mainthm00}.

\vspace{2mm}
\noindent{\it Proof of Corollary \ref{thmsphere}:}
In the hemisphere $\mathbb{S}_+^n$, there exists an elliptic point. Thus $L_k=const.$ is equivalent to
$$\sum_{i=1}^k\binom{k}{i}H_{2i}=a_0,$$ for some positive $a_0$. Hence the conclusion follows from Theorem \ref{mainthm00}.
\qed

\

However, the hyperbolic case is not that easy. We will apply a new kind of Newton-Maclaurin type inequality to the hyperbolic case. It is clear that in hyperbolic space
\begin{equation}\label{widetildeL}
\widetilde{L}_{k}=\sum_{i=0}^k\binom{k}{i}(-1)^{k-i}H_{2i},\qquad\widetilde{N}_{k}=\sum_{i=0}^k\binom{k}{i}(-1)^{k-i}H_{2i+1}.
\end{equation}

Due to the sign-changed coefficients of $L_k$ in terms of $H_k$, it seems to be difficult to apply Newton-Maclaurin inequalities directly.
Fortunately, under the condition of horoconvexity, we have the following refined Newton-Maclaurin inequalities \cite{GWW2}.
\begin{prop}\label{key_lemm}
 For  any $\kappa$ satisfying
\begin{equation*}\label{h-convex}
\kappa\in \{\kappa=(\kappa_1,\kappa_2,\cdots,\kappa_{n-1})\in\R^{n-1}\,|\, \kappa_i\ge 1\},
\end{equation*} we have
\begin{equation}\label{key inequ.}
\widetilde{N}_k-H_1\widetilde{L}_k\leq0.
\end{equation}
Equality holds if and only if
one of the following two cases holds
$$
\hbox{either } \quad (i)\,  \kappa_i=\kappa_j \; \forall \, i,j,  \quad\hbox{ or }\quad (ii)\,k\geq 2, \; \exists \, i\, \hbox{ with }\kappa_i >1\, \& \, \kappa_j=1 \; \forall j\neq i.$$
\end{prop}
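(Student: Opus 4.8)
The plan is to prove the inequality $\widetilde{N}_k - H_1\widetilde{L}_k \le 0$ under horoconvexity by a change of variables that converts the sign-alternating combinations \eqref{widetildeL} into ordinary symmetric functions with positive arguments. The key observation is that the coefficients $\binom{k}{i}(-1)^{k-i}$ in $\widetilde{L}_k$ and $\widetilde{N}_k$ are exactly the binomial expansion of $(x-1)^k$, so introducing the shifted principal curvatures $\mu_i := \kappa_i - 1 \ge 0$ should collapse these combinations into a single elementary symmetric function of the $\mu_i$. Concretely, I would first verify the identity
\begin{equation*}
\widetilde{L}_k(\kappa) = \frac{\sigma_{2k}(\mu)}{\binom{n-1}{2k}}\quad\text{(up to normalization)}, \qquad \kappa_i=\mu_i+1,
\end{equation*}
by expanding $H_{2i}(\kappa)$ in terms of the $\mu_j$ and matching the alternating-sign sum against $\prod(\mu_j)$-type expressions; the generating-function relation $\sum_m \sigma_m(\kappa)t^m = \prod_i(1+\kappa_i t)$ evaluated with the substitution $\kappa_i = \mu_i+1$ makes this bookkeeping transparent. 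A parallel identity should express $\widetilde{N}_k$ in terms of the $\mu$-variables as well, presumably involving $\sigma_{2k+1}(\mu)$ together with $\sigma_{2k}(\mu)$.

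Once both quantities are rewritten in the nonnegative variables $\mu_i \ge 0$, the claimed inequality \eqref{key inequ.} should reduce to a genuine Newton–Maclaurin inequality of the type already recorded in Lemma \ref{lem}, applied to the vector $\mu \in \overline{\Gamma_k^+}$ (horoconvexity guarantees $\mu_i \ge 0$, so $\mu$ lies in the closed positive cone and all such inequalities are available). The point is that after the shift the problematic sign changes disappear, and $\widetilde{N}_k - H_1\widetilde{L}_k$ becomes a bilinear expression in $\sigma_{2k}(\mu), \sigma_{2k+1}(\mu)$ and the relevant $H_1$ factor whose sign is controlled by \eqref{NM}. I would therefore establish the inequality in the form $\sigma_{2k}(\mu)\,\sigma_{1}(\text{something}) \ge \sigma_{2k+1}(\mu)\,(\cdots)$ and invoke the monotonicity/Maclaurin estimates directly.

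The main obstacle I anticipate is the \emph{equality analysis}, not the inequality itself. The two equality cases in the statement—either all $\kappa_i$ equal, or (for $k\ge 2$) exactly one $\kappa_i>1$ with all others equal to $1$—correspond under the substitution $\mu_i=\kappa_i-1$ to either $\mu=c(1,\dots,1)$ or $\mu$ having exactly one nonzero entry. The first is the standard Newton–Maclaurin equality case from Lemma \ref{lem}; the second is a \emph{boundary} equality case of the cone $\overline{\Gamma_k^+}$ that does not appear in the strict interior statement of Lemma \ref{lem}, and isolating it will require tracking precisely which Newton–Maclaurin inequality is used and verifying that a single nonzero $\mu_i$ forces equality only when $k\ge 2$ (since $\sigma_{2k+1}(\mu)$ and the higher terms vanish identically in that degenerate configuration). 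Handling this degenerate stratum carefully, rather than discarding it, is where the real care lies; I would treat it by examining the expansions termwise when $\mu$ has support of size one and confirming the dichotomy stated in cases (i) and (ii).
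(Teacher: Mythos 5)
Your proposal breaks down at its central step: the claimed collapse $\widetilde{L}_k(\kappa)=\sigma_{2k}(\mu)/\binom{n-1}{2k}$ (even up to a constant) under the shift $\mu_i=\kappa_i-1$ is false. Test $k=1$: by \eqref{widetildeL}, $\widetilde{L}_1=H_2-1$, while writing $\hat H_j:=H_j(\mu)$ one computes $H_2(\kappa)=1+2\hat H_1+\hat H_2$, hence $\widetilde{L}_1=2\hat H_1+\hat H_2$, which is not proportional to $\hat H_2$; the extra term $2\hat H_1$ is strictly positive unless $\kappa=(1,\dots,1)$. The binomial heuristic fails because $H_{2i}(\kappa)$ is not a $2i$-th power of anything: its expansion in $\mu$ is $H_{2i}(\kappa)=\sum_j\binom{2i}{j}\hat H_j$, and the alternating sum over $i$ telescopes not to a single term but to the positive combination $\widetilde{L}_k=\sum_{m=k}^{2k}2^{2k-m}\binom{k}{2k-m}\hat H_m$; this is precisely the expansion \eqref{eq001} that the paper derives for the proof of Proposition \ref{key_lemm2}. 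So your shift does remove the sign problem (that part of the intuition is sound, and it is exactly the paper's device for Proposition \ref{key_lemm2}), but the output is a sum of several $\hat H_m$'s, and for $k\ge2$ the inequality $\widetilde N_k\le H_1\widetilde L_k$ is not a single instance of \eqref{NM}. A genuine repair needs the companion expansion of $\widetilde N_k$ as well --- and beware that the paper's own printed formula for $\widetilde N_k$ in \eqref{eq001} is itself erroneous: $\widetilde N_1=H_3-H_1=2\hat H_1+3\hat H_2+\hat H_3$, not $\hat H_3+2\hat H_2$. The correct structural fact is $\widetilde N_k=\sum_m c_m\bigl(\hat H_m+\hat H_{m+1}\bigr)$ whenever $\widetilde L_k=\sum_m c_m\hat H_m$, whence $H_1\widetilde L_k-\widetilde N_k=\sum_m c_m\bigl(\hat H_1\hat H_m-\hat H_{m+1}\bigr)\ge0$ follows termwise from \eqref{NM}. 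None of this bookkeeping appears in your proposal; as written, the argument rests on an identity that is wrong.

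For comparison, the paper's proof of this proposition is entirely different: it is a citation of \cite{GWW2}, the key point being that \eqref{key inequ.} is equivalent to the permutation sum \eqref{permu-sum-3}, which is manifestly nonnegative when every $\kappa_i\ge1$ and from which the equality discussion is read off term by term. Moreover, your equality analysis is set up to fail: once the expansions are corrected, both $\widetilde L_k$ and $\widetilde N_k$ vanish identically as soon as $\mu$ has at most $k-1$ nonzero entries (then $\hat H_m=0$ for every $m\ge k$), so equality holds for every such $\kappa$ --- for instance $k=3$ and $\kappa=(a,b,1,\dots,1)$ with $a\ne b$, $a,b>1$, where one checks directly that $\widetilde L_3=\widetilde N_3=0$. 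Your plan of examining only $\mu$ of support size one therefore cannot close the ``only if'' direction; it also reveals that case (ii), as literally stated, exhausts the degenerate equality stratum only when $k=2$, and that the robust conclusion (the one actually used in the proof of Theorem \ref{Lkcons.}) is that any non-umbilic equality configuration forces $\widetilde L_k\equiv0$, which is what the existence of a horo-elliptic point rules out.
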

\begin{proof}
This proposition is proved in \cite{GWW2}. The key point is to observe that (\ref{key inequ.}) is equivalent to the following inequality:
\begin{align}\label{permu-sum-3}
\sum_{1\le i_m\le n-1, i_j\neq i_l(j\neq l)} \kappa_{i_1}(\kappa_{i_2}\kappa_{i_3}-1)
(\kappa_{i_4}\kappa_{i_5}-1)\cdots(\kappa_{i_{2k-2}}\kappa_{i_{2k-1}}-1)\big(\kappa_{i_{2k}}-\kappa_{i_{2k+1}}\big)^2 \ge 0,
\end{align}
where the summation takes over all the  $(2k+1)$-elements permutation of $\{1,2,\cdots, n-1\}$.
We refer the readers to \cite{GWW2} for more details.
\end{proof}

With all above preparing work, we are ready to prove a special case of Theorem \ref{thmLk} first.

\begin{theo}\label{Lkcons.}
Let $1\leq k\leq \frac{n-1}{2}$ be an integer and $\Sigma^{n-1}$ be a closed horospherical convex hypersurface in the hyperbolic space $\H^n$. If $L_k$ with the induced metric on $\S$ is constant,  then $\Sigma$ is a centered geodesic hypersphere.
\end{theo}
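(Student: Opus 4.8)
The plan is to prove Theorem \ref{Lkcons.} by the same integral-formula strategy used for Theorem \ref{mainthm2}(i), now carried out for the intrinsic quantity $L_k$ in place of the extrinsic $VH_k$. Since $\Sigma$ is horospherically convex, all principal curvatures satisfy $\kappa_i\ge 1$; in particular $\Sigma$ is $2k$-convex and has an elliptic point, so all the relevant symmetric functions are positive and the refined Newton--Maclaurin inequality of Proposition \ref{key_lemm} is available. The constancy of $L_k$ means $\widetilde{L}_k\equiv \mathrm{const}$, and because of the elliptic point this constant is positive. The goal is to combine the Minkowski formula \eqref{Minkowski_Identity} for $\H^n$ (where $V=\cosh r$), summed with the binomial coefficients $\binom{k}{i}(-1)^{k-i}$ appearing in \eqref{widetildeL}, with the horoconvex inequality \eqref{key inequ.} and the Heintze--Karcher inequality of Proposition \ref{B}, to force everything to be an equality, which yields umbilicity.

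First I would take the Minkowski identity $\int_\Sigma \langle X,\nu\rangle H_{m}\,d\mu=\int_\Sigma V H_{m-1}\,d\mu$ for each relevant $m$, multiply by the coefficient $\binom{k}{i}(-1)^{k-i}$ with $m=2i$, and sum over $i$. By the definitions \eqref{widetildeL} of $\widetilde L_k$ and $\widetilde N_k$, the left-hand sides assemble into $\int_\Sigma \langle X,\nu\rangle\,\widetilde L_k\,d\mu$, while the right-hand sides assemble into a combination that I expect to identify with $\int_\Sigma V\,\widetilde N_{k-1}$-type terms, or more precisely into $\int_\Sigma V\,\widetilde{L}_k^{\flat}\,d\mu$ where the shifted indices match up. The point of this bookkeeping is to produce a single scalar identity relating $\int_\Sigma \langle X,\nu\rangle \widetilde L_k\,d\mu$ to a weighted integral of $\widetilde N_k$ or $\widetilde L_{k-1}$. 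Since $\widetilde L_k=c$ is constant, the left side is $c\int_\Sigma\langle X,\nu\rangle\,d\mu$, and I can then feed in Proposition \ref{B} to bound $\int_\Sigma\langle X,\nu\rangle\,d\mu\le \int_\Sigma \tfrac{V}{H_1}\,d\mu$.

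The decisive step is the insertion of the refined inequality \eqref{key inequ.}, $\widetilde N_k\le H_1\widetilde L_k$, valid pointwise under horoconvexity. Dividing by $H_1>0$ gives $\widetilde N_k/H_1\le \widetilde L_k=c$, and this is exactly the inequality needed to close the chain: running the Minkowski combination through \eqref{key inequ.} and then Proposition \ref{B} produces a closed loop of inequalities beginning and ending with $c\int_\Sigma\langle X,\nu\rangle\,d\mu$, forcing every intermediate inequality to be an equality. In particular equality in Proposition \ref{B} gives that $\Sigma$ is totally umbilical, and equality in \eqref{key inequ.} together with the elliptic point rules out the degenerate case (ii) of Proposition \ref{key_lemm}, leaving case (i), i.e.\ $\kappa_i=\kappa_j$ for all $i,j$. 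A totally umbilical closed hypersurface in $\H^n$ with constant principal curvature is a geodesic sphere, and constancy of $L_k$ then pins down its radius; the standard fact that such a sphere is centered completes the argument.

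The main obstacle I anticipate is the algebraic matching in the second step: verifying that the alternating binomial combination of the Minkowski identities reassembles cleanly into the intrinsic quantities $\widetilde L_k$ and $\widetilde N_k$ (or their index-shifted cousins) with the correct signs, so that the weight $V=\cosh r$ distributes exactly as required to apply \eqref{key inequ.}. This is where the sign-changed coefficients in \eqref{widetildeL} could create cross terms that do not obviously cancel, and one must track the index shift $2i\mapsto 2i-1$ induced by $H_m\mapsto H_{m-1}$ carefully to confirm that the right-hand sum is genuinely controlled by $\widetilde N_{k}$ rather than an uncontrolled mixture. Once this identity is established, the inequality chain and the rigidity (equality) analysis proceed exactly as in the proof of Theorem \ref{mainthm2}(i).
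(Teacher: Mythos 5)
Your overall strategy --- summing the Minkowski identities against the coefficients $\binom{k}{i}(-1)^{k-i}$, invoking Proposition \ref{key_lemm}, and excluding the degenerate equality case (ii) by means of a horo-elliptic point --- is exactly the paper's, but your bookkeeping runs in the wrong direction, and the obstacle you flag at the end is fatal in the form you set up. Taking $m=2i$ pairs $\widetilde L_k$ with $\langle X,\nu\rangle$ and leaves on the $V$-side the combination $\sum_{i}\binom{k}{i}(-1)^{k-i}H_{2i-1}$, which is neither $\widetilde N_{k-1}$ nor anything Proposition \ref{key_lemm} controls: already for $k=2$ it equals $H_3-2H_1$, whereas $\widetilde N_1=H_3-H_1$. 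The correct move (the paper's) is to pair the \emph{odd} indices with $\langle X,\nu\rangle$, i.e.\ take $m=2i+1$, so that $\int_\Sigma\langle X,\nu\rangle H_{2i+1}\,d\mu=\int_\Sigma VH_{2i}\,d\mu$ assembles into the clean identity $\int_\Sigma\langle X,\nu\rangle\widetilde N_k\,d\mu=\int_\Sigma V\widetilde L_k\,d\mu$; this places the constant $\widetilde L_k$ on the $V$-side and the quantity controlled by \eqref{key inequ.} on the $\langle X,\nu\rangle$-side.

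Once that identity is in hand, Proposition \ref{B} is neither needed nor usable. With $\widetilde L_k\equiv c$ one gets $c\int_\Sigma V\,d\mu=\int_\Sigma\langle X,\nu\rangle\widetilde N_k\,d\mu$, and the bottom-level Minkowski identity $\int_\Sigma V\,d\mu=\int_\Sigma\langle X,\nu\rangle H_1\,d\mu$ (i.e.\ $\widetilde L_0=1$, $\widetilde N_0=H_1$) is an exact identity, so $\int_\Sigma\langle X,\nu\rangle\bigl(\widetilde N_k-H_1\widetilde L_k\bigr)\,d\mu=0$ with no Heintze--Karcher input. Since horoconvexity makes $\Sigma$ convex, hence star-shaped about an interior origin with $\langle X,\nu\rangle>0$, and \eqref{key inequ.} gives $\widetilde N_k-H_1\widetilde L_k\le 0$ pointwise, equality must hold everywhere on $\Sigma$. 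Note also that umbilicity then comes from case (i) of Proposition \ref{key_lemm}, after your (correct) exclusion of case (ii) at a horo-elliptic point, and \emph{not} from the equality case of Proposition \ref{B} as you assert: in your arrangement Proposition \ref{B} bounds $\int_\Sigma\langle X,\nu\rangle\,d\mu$ by $\int_\Sigma V/H_1\,d\mu$, a quantity that never meets the terms produced by your expansion, so the claimed ``closed loop'' of inequalities does not actually close.
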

\begin{proof}
Since $L_1=R=(n-1)(n-2)(H_2-1),$
it suffices to discuss the remaining case $k\geq 2.$
Observe that (\ref{Minkowski_Identity}) implies
\beq\label{eq1_Lk}
\int_{\S}V\widetilde L_k d\mu=\int_{\S}\<X,\nu\>\widetilde N_k d\mu.
\eeq
The definition of $\widetilde L_k,\,\widetilde N_k$ gives $$\widetilde L_0=1,\widetilde N_0=H_1.$$
Thus using (\ref{Minkowski_Identity}) again, we have
\beq\label{eq2_Lk}
\int_{\Sigma} V \widetilde L_{0}d\mu= \int_{\Sigma}\<X,\nu\> \widetilde N_{0} d\mu.
\eeq
By (\ref{Lk1}), we know that $\widetilde L_k$ is also constant.
Combining (\ref{eq1_Lk}) and (\ref{eq2_Lk}) together, we have
$$\int_{\S}\<X,\nu\>(\widetilde N_k-\widetilde N_0\widetilde L_k)=0.$$
On the other hand, (\ref{key inequ.}) yields
$$\widetilde N_k-\widetilde N_0\widetilde L_k\leq 0.$$
This forces $$\widetilde N_k-\widetilde N_0\widetilde L_k= 0.$$
everywhere in $\Sigma.$ By Proposition \ref{key_lemm}, there are two cases that equality holds. However, we assert that the second case will not happen.
In fact, in case $(ii)$ we have from (\ref{eq001}) below that $$\widetilde L_k\equiv 0,\;\;\forall \,\;k\geq 2.$$ However, in $\mathbb{H}^n$, there exists a horo-elliptic point, where all principal curvatures are strictly larger than $1$ (this follows from the fact $\l'(r)/\l(r)>1$). Hence it follows again from (\ref{eq001}) below that at this point $\widetilde L_k>0.$
We get a contradiction. Therefore we conclude that $\Sigma$ is a geodesic sphere.
\end{proof}
\vspace{2mm}

To prove the rigidity result regarding the general linear combination of $L_k$, Proposition \ref{key_lemm} is not enough. We need to develop the following more general Newton-Maclaurin type inequalities which may have independent interest.
\begin{prop}\label{key_lemm2}
For  any $\kappa$ satisfying
\begin{equation*}
\kappa\in \{\kappa=(\kappa_1,\kappa_2,\cdots,\kappa_{n-1})\in\R^{n-1}\,|\, \kappa_i\ge 1\},
\end{equation*}
we have
\begin{equation}\label{aim}
\widetilde N_{k-1}\widetilde L_k\geq \widetilde N_k \widetilde L_{k-1}.
\end{equation}
Equality holds if and only if
one of the following two cases holds
$$
\hbox{either } \quad (i)\,  \kappa_i=\kappa_j \; \forall \; i,j,  \quad\hbox{ or }\quad (ii)\,k\geq 2,\;\, \exists \, i\, \hbox{ with }\kappa_i >1\, \& \, \kappa_j=1 \; \forall\; j\neq i.$$
\end{prop}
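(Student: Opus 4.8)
The plan is to recast \eqref{aim} in a form where horospherical convexity makes the relevant building blocks manifestly nonnegative, and then to exhibit the difference $\widetilde N_{k-1}\widetilde L_k-\widetilde N_k\widetilde L_{k-1}$ as a sum of squares, in the same spirit as the identity \eqref{permu-sum-3} underlying Proposition \ref{key_lemm}. First I would record the combinatorial structure of $\widetilde L_k$ and $\widetilde N_k$ hidden behind the sign-changing coefficients in \eqref{widetildeL}. Diagonalizing the second fundamental form and using the Gauss equation in $\H^n$, the sectional curvature of the induced metric on the $2$-plane spanned by the $a$-th and $b$-th principal directions is $\kappa_a\kappa_b-1$; feeding this into \eqref{Lk} and antisymmetrizing with the generalized Kronecker delta shows that, up to a fixed positive constant, $\widetilde L_k$ equals the weighted matching sum $\sum\prod_{m=1}^{k}(\kappa_{a_m}\kappa_{b_m}-1)$ over all choices of $k$ pairwise disjoint index pairs $\{a_m,b_m\}$, while $\widetilde N_k$ equals the analogous sum $\sum\kappa_c\prod_{m=1}^{k}(\kappa_{a_m}\kappa_{b_m}-1)$ carrying one extra unpaired factor $\kappa_c$ (for $k=0,1$ this reproduces $\widetilde L_0=1$, $\widetilde N_0=H_1$, $\widetilde L_1=H_2-1$). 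Under horospherical convexity $\kappa_i\ge 1$, so every factor $\kappa_a\kappa_b-1\ge 0$ and $\kappa_c\ge 1>0$, and each monomial that appears is nonnegative.

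Next I would prove \eqref{aim} by a permutation/sum-of-squares argument. Expanding $\widetilde N_{k-1}\widetilde L_k$ and $\widetilde N_k\widetilde L_{k-1}$ through the matching-sum representation (keeping careful track of the normalizing constants so that both sides are compared on an equal footing), the diagonal contributions cancel, and the surviving off-diagonal terms are grouped according to which of two competing indices carries the unpaired factor $\kappa$. Swapping these two roles and collecting should produce a manifestly nonnegative expression of the form
\[
\widetilde N_{k-1}\,\widetilde L_k-\widetilde N_k\,\widetilde L_{k-1}
= c\sum \kappa_{i_1}\Big(\prod_{m=1}^{2k-2}\big(\kappa_{i_{2m}}\kappa_{i_{2m+1}}-1\big)\Big)\big(\kappa_{i_p}-\kappa_{i_q}\big)^2,
\]
where the sum ranges over the appropriate tuples of $4k-1$ distinct indices and $c>0$, in direct analogy with \eqref{permu-sum-3}. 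Since each factor $\kappa_{i_1}$, each $\kappa\kappa-1$, and each square $(\kappa_{i_p}-\kappa_{i_q})^2$ is nonnegative on the horospherically convex cone, the inequality \eqref{aim} follows at once.

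The equality case is then read off from the vanishing of this sum of squares. Equality forces $(\kappa_{i_p}-\kappa_{i_q})^2=0$ for every tuple whose accompanying product of $\kappa\kappa-1$ factors is strictly positive. If all $\kappa_i$ coincide this holds automatically, giving case (i); otherwise at least two principal curvatures differ, and for the corresponding square terms to be annihilated one needs all the relevant pair-factors $\kappa_a\kappa_b-1$ to vanish, which under $\kappa_i\ge1$ can only occur when all but one of the $\kappa_i$ equal $1$, and tracking degrees forces $k\ge2$, yielding case (ii). The main obstacle I anticipate is establishing the displayed sum-of-squares identity itself: unlike the single matching sum in \eqref{permu-sum-3}, here one expands a \emph{product} of two matching sums of different sizes, so the bookkeeping of which monomials cancel and how the survivors organize into squares $(\kappa_{i_p}-\kappa_{i_q})^2$ is considerably more delicate, and pinning down the combinatorial constant $c$ together with the precise index ranges is where the real work lies.
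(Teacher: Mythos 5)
Your opening step is sound: diagonalizing the second fundamental form and using the Gauss equation in $\mathbb{H}^n$, the induced curvature operator is diagonal with eigenvalues $\kappa_a\kappa_b-1$, so the contraction in \eqref{Lk} does reduce $\widetilde L_k$ (resp.\ $\widetilde N_k$) to a positive multiple of the matching sum $\sum\prod_{m=1}^{k}(\kappa_{a_m}\kappa_{b_m}-1)$ (resp.\ the same sum with one unpaired factor $\kappa_c$); this is exactly the structure behind \eqref{permu-sum-3}. The gap is that the entire proof is then delegated to a sum-of-squares identity that you do not establish, and which, in the form you wrote it, is false. Your right-hand side is a sum over tuples of $4k-1$ \emph{distinct} indices, hence vanishes identically as soon as $n-1<4k-1$, and this regime contains legitimate instances of the proposition. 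Concretely, take $k=2$, $n-1=5$ (hypersurfaces in $\mathbb{H}^6$, allowed in Theorem \ref{thmLk} since $k=2\le\frac{n-1}{2}$) and $\kappa=(2,2,1,1,1)$. Then $H_1=\frac{7}{5}$, $H_2=\frac{19}{10}$, $H_3=\frac{5}{2}$, $H_4=\frac{16}{5}$, $H_5=4$, so $\widetilde N_1=\frac{11}{10}$, $\widetilde L_2=\frac{2}{5}$, $\widetilde N_2=\frac{2}{5}$, $\widetilde L_1=\frac{9}{10}$, and $\widetilde N_1\widetilde L_2-\widetilde N_2\widetilde L_1=\frac{2}{25}>0$, while your proposed sum is empty and hence zero for any constant $c$. (This is consistent with the proposition: two entries of $\kappa$ exceed $1$, so neither equality case applies and the inequality is strict.) The failure is structural, not a matter of bookkeeping: the monomials of $\widetilde N_{k-1}\widetilde L_k-\widetilde N_k\widetilde L_{k-1}$ come from products of two matchings that may \emph{overlap}, so they can involve far fewer than $4k-1$ distinct variables, and no certificate supported on all-distinct-index tuples can represent them for every $n$. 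For $k=1$ your identity is the classical one and is fine; the overlap problem begins at $k\ge 2$, which is precisely the new case, and your equality analysis collapses together with the identity it rests on.

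By contrast, the paper never expands a product of matching sums. It substitutes $\kappa_i=1+\hat\kappa_i$, so that horoconvexity becomes $\hat\kappa_i\ge 0$, rewrites everything in terms of $\hat H_j:=H_j(\hat\kappa)$ via $H_m=\sum_j\binom{m}{j}\hat H_j$ (all coefficients now nonnegative), and organizes the result into the two-parameter family $X_{s,t}=\sum_i 2^i\binom{t}{i}\hat H_{s+t-i}$, so that \eqref{aim} becomes \eqref{equiv.form}. Everything then reduces to the log-concavity \eqref{eq.1}, $X_{s,t}^2\ge X_{s+1,t}X_{s-1,t}$, proved by induction on $t$ using the recursion $X_{s,t+1}=X_{s+1,t}+2X_{s,t}$, with the classical Newton--Maclaurin inequality as the base case $t=0$. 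If you wish to salvage your route, you must allow repeated indices in the certificate and control the overlap terms explicitly --- which is exactly the ``real work'' you postponed; as written, the proposal restates the difficulty rather than resolving it.
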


\begin{proof}
Set $$\kappa_i=1+\hat \kappa_i,$$ then $\hat\kappa_i\geq 0$ for any $i\in\{1,\cdots,n-1\}$.
Define
$$\hat H_i:=H_i(\hat\kappa_1,\hat\kappa_2,\cdots,\hat\kappa_{n-1}).$$
Then
$$H_k=\sum_{i=0}^k \binom{k}{i} \hat H_i.$$
thus
\begin{eqnarray}\label{eq001}
\widetilde L_k=\sum_{i=0}^{k} 2^i \binom{k}{i} \hat H_{2k-i},\quad
\widetilde N_k=\sum_{i=0}^{k} 2^i \binom{k}{i} \hat H_{2k+1-i}.
\end{eqnarray}
Observing that $\widetilde L_k$ and $\widetilde N_k$ can be splitted into two terms,
\begin{eqnarray*}
\widetilde L_k &=&\sum_{i=0}^{k-1} 2^i \binom{k-1}{i}\hat H_{2k-i}+2\sum_{i=0}^{k-1} 2^i\binom{k-1}{i}\hat H_{2k-1-i},\\
\widetilde N_k& =&\sum_{i=0}^{k-1} 2^i \binom{k-1}{i}\hat H_{2k+1-i}+2\sum_{i=0}^{k-1} 2^i\binom{k-1}{i}\hat H_{2k-i},\\
\end{eqnarray*}
we introduce the notation
\begin{equation*}
X_{s,t}=:\sum_{i=0}^t  2^i \binom{t}{i} \hat H_{s+t-i}.
\end{equation*}
It is clear that
\begin{eqnarray*}
\widetilde L_k&=&X_{k+1,k-1}+2X_{k,k-1}, \quad\;\;\widetilde L_{k-1}=X_{k-1,k-1},\\
\widetilde N_k&=&X_{k+2,k-1}+2X_{k+1,k-1}, \quad\widetilde N_{k-1}=X_{k,k-1}.\\
\end{eqnarray*}
Hence the desired result (\ref{aim}) is equivalent to
\begin{equation}\label{equiv.form}
\left(X_{k+1,k-1}+2X_{k,k-1}\right)X_{k,k-1}\geq \left(X_{k+2,k-1}+2X_{k+1,k-1}\right)X_{k-1,k-1}.
\end{equation}
We claim that this is true. In fact, we can show the  more general result as  stated in the following lemma:
\begin{lemm}\label{lemm1}
 For any $s\geq 1$ and $t\geq 0,$
\begin{equation}\label{eq.1}
X_{s,t}^2\geq X_{s+1,t}X_{s-1,t}.
\end{equation}
\end{lemm}
\begin{proof}
We use the induction argument for $t$ to prove this lemma.
When $t=0$, (\ref{eq.1}) holds for any $s\ge1$ by the standard Newton-MacLaurin identity (\ref{NM}).
Assume (\ref{eq.1}) holds for $t$, we need to prove that (\ref{eq.1}) holds for $t+1$.
Observe the relation that
\begin{equation}\label{observation}
X_{s,t+1}=X_{s+1,t}+2X_{s,t}.
\end{equation}
Using the assumption that (\ref{eq.1}) holding for any $s\geq 1$ and fixed $t$, we derive
\begin{eqnarray*}
&&X_{s,t+1}^2- X_{s+1,t+1}X_{s-1,t+1}\\
&=&\left(X_{s+1,t}^2-X_{s+2,t}X_{s,t}\right)+2\left(X_{s+1,t}X_{s,t}-X_{s+2,t}X_{s-1,t}\right)+4\left(X_{s,t}^2-X_{s+1,t}X_{s-1,t}\right)\\
&\geq&0.
\end{eqnarray*}
The proof of the lemma is completed.
\end{proof}
Choosing $t=k-1$ in  (\ref{eq.1}), it is easy to see that (\ref{equiv.form}) holds. Hence we complete the proof of Proposition \ref{key_lemm2}.
\end{proof}
\vspace{2mm}

We are now in a position to prove the general case of Theorem \ref{thmLk}.

\vspace{2mm}
\noindent {\it Proof of  Theorem \ref{thmLk}:}
By (\ref{Lk1}) and (\ref{widetildeL}), the assumption is equivalent to
$$\sum_{j=l}^k \widetilde b_j \widetilde L_j=\sum_{i=0}^{l-1}\widetilde a_i\widetilde L_i,$$
where
$$\widetilde a_i= \binom{n-1}{2i}(2i!)\,a_i,\;\widetilde b_j=\binom{n-1}{2j}(2j!)\,b_j.$$
Inductively using (\ref{aim}), we get
\begin{eqnarray}\label{LN1}
\widetilde L_j \widetilde N_i\geq \widetilde N_j \widetilde L_i,\; \mbox{for}\; j>i,
\end{eqnarray}
thus we have
\begin{eqnarray}
\sum_{i=0}^{k-1}  \widetilde a_i \widetilde N_i \sum_{j=l}^k  \widetilde b_j\widetilde L_j \geq \sum_{j=l}^k \widetilde b_j \widetilde N_j  \sum_{i=0}^{k-1} \widetilde a_i\widetilde L_i.
\end{eqnarray}
Hence
\begin{eqnarray}
\sum_{i=0}^{k-1} \widetilde a_i \widetilde N_i \geq \sum_{j=l}^k\widetilde  b_j\widetilde N_j.
\end{eqnarray}
Therefore applying (\ref{eq1_Lk}), we have
\begin{eqnarray}
0=\int_\Sigma V(\sum_{j=l}^k \widetilde b_j \widetilde L_j-\sum_{i=0}^{k-1} \widetilde a_i\widetilde L_i)d\mu=\int_\Sigma \<X,\nu\>(\sum_{j=l}^k \widetilde b_j \widetilde N_j-\sum_{i=0}^{k-1} \widetilde a_i \widetilde N_i)d\mu\leq 0.
\end{eqnarray}
Arguing  as in the proof of Theorem \ref{Lkcons.}, one can exclude the case (ii) in Proposition \ref{key_lemm2}. Hence we conclude $\S$ is a geodesic sphere.
\qed

\

A suitable adaption of the  above argument allows us to demonstrate the same result in the hemisphere case, Theorem \ref{thmLkNk}.

\vspace{2mm}
\noindent{\it Proof of Theorem \ref{thmLkNk}:} According to the proof of Theorem \ref{thmLk}, it suffices to establish the corresponding inequality of (\ref{aim}) in $\mathbb{S}^n_+$ under the assumption of $2k$-convexity. The proof basically follows from the one of Proposition \ref{key_lemm2} except some modifications, so we briefly sketch it here.
First, using the simple fact $\binom{k}{i}=\binom{k-1}{i}+C_{k-1}^{i-1}$, in view of \eqref{Lk1}, we can split $\widetilde L_k$ and $\widetilde N_k$ into two terms
\begin{eqnarray*}
\widetilde L_k = \sum_{i=0}^{k-1} \binom{k-1}{i} H_{2k-2i}+\sum_{i=0}^{k-1} \binom{k-1}{i} H_{2k-2-2i},\\
\widetilde N_k = \sum_{i=0}^{k-1} \binom{k-1}{i} H_{2k+1-2i}+\sum_{i=0}^{k-1} \binom{k-1}{i} H_{2k-1-2i}.
\end{eqnarray*}
Next we introduce the notation
\begin{equation*}
X_{s,t}=:\sum_{i=0}^t \binom{t}{i} H_{s+2t-2i}.
\end{equation*}
It is clear that
\begin{eqnarray*}
\widetilde L_k&=&X_{2,k-1}+X_{0,k-1}, \quad\;\;\widetilde L_{k-1}=X_{0,k-1},\\
\widetilde N_k&=&X_{3,k-1}+X_{1,k-1}, \quad\widetilde N_{k-1}=X_{1,k-1}.\\
\end{eqnarray*}
By a similar induction argument as in the proof of Lemma \ref{lemm1}, one can show that
\beq\label{eq111}
\mbox{For any } s\geq 1\;\mbox{and}\; t\geq 0, \quad X_{s,t}X_{s+1,t}\geq X_{s-1,t}X_{s+2,t}.
\eeq
Finally choosing $s=1,t=k-1$ in (\ref{eq111}), we obtain
\begin{equation}\label{aim00}
\widetilde N_{k-1}\widetilde L_k\geq \widetilde N_k \widetilde L_{k-1}.
\end{equation}
We complete the proof.
\qed

\vspace{2mm}
In a similar way, one can also prove the rigidity result for the curvature functions $N_k$. We only state the result here and leave the proof to readers.
\begin{theo}\label{thmNk}Let $1\leq l<k\leq \frac{n-2}{2}$ be two integers and $\Sigma$ be a closed $(2k+1)$-convex $($horospherical convex resp.$)$ hypersurface embedded in the hyperbolic space $\mathbb{S}_+^n$ $($$\H^n$, resp.$)$. If  there are nonnegative constants $\{a_i\}_{i=0}^{l-1}$ and $\{b_j\}_{j=l}^k$,  at least one of them not vanishing, such that
$$\sum_{i=0}^{l-1}a_iN_i=\sum_{j=l}^k b_j N_j,$$
 then $\Sigma$ is a centered geodesic hypersphere. In particular, if $N_k$  is constant,  then $\Sigma$ is a centered geodesic hypersphere.
\end{theo}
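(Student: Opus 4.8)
The plan is to imitate the proofs of Theorems \ref{thmLk} and \ref{thmLkNk}, but now reading the hypothesis on $\widetilde N$ rather than on $\widetilde L$ and closing the loop with the Minkowski identity in the direction that pairs $\widetilde N$ against $\widetilde L$. The two structural facts I rely on are already available: the Minkowski identity $\int_\S\<X,\nu\>\widetilde N_j\,d\mu=\int_\S V\widetilde L_j\,d\mu$, which is (\ref{eq1_Lk}) and holds in every space form, and the cross inequalities $\widetilde L_j\widetilde N_i\ge\widetilde N_j\widetilde L_i$ for $j>i$, which are (\ref{LN1}) in $\H^n$ and follow in $\mathbb{S}^n_+$ by the same induction from (\ref{aim00}). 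First I normalize: writing $\widetilde a_i=\binom{n-1}{2i}(2i)!\,a_i$ and $\widetilde b_j=\binom{n-1}{2j}(2j)!\,b_j$, the hypothesis becomes $\sum_{i=0}^{l-1}\widetilde a_i\widetilde N_i=\sum_{j=l}^k\widetilde b_j\widetilde N_j$, a combination that is positive by the existence of a (horo-)elliptic point exactly as in Theorem \ref{thmLk}. Note that $N_k$ involves $H_{2k+1}$, so the bound $k\le\frac{n-2}{2}$ and the $(2k+1)$-convexity in the hemisphere case are precisely what make the Newton--Maclaurin inequalities up to order $2k+1$ applicable.

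For the combination statement I multiply the cross inequality $\widetilde L_j\widetilde N_i\ge\widetilde N_j\widetilde L_i$ (every admissible pair has $j\ge l>l-1\ge i$) by $\widetilde a_i\widetilde b_j\ge0$ and sum, obtaining $\big(\sum_i\widetilde a_i\widetilde N_i\big)\big(\sum_j\widetilde b_j\widetilde L_j\big)\ge\big(\sum_j\widetilde b_j\widetilde N_j\big)\big(\sum_i\widetilde a_i\widetilde L_i\big)$. Dividing by the common positive factor $\sum_i\widetilde a_i\widetilde N_i=\sum_j\widetilde b_j\widetilde N_j$ gives the pointwise bound $\sum_{j=l}^k\widetilde b_j\widetilde L_j\ge\sum_{i=0}^{l-1}\widetilde a_i\widetilde L_i$. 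The Minkowski identity then closes the argument:
\[
0=\int_\S\<X,\nu\>\Big(\sum_{j=l}^k\widetilde b_j\widetilde N_j-\sum_{i=0}^{l-1}\widetilde a_i\widetilde N_i\Big)d\mu=\int_\S V\Big(\sum_{j=l}^k\widetilde b_j\widetilde L_j-\sum_{i=0}^{l-1}\widetilde a_i\widetilde L_i\Big)d\mu\ge0,
\]
the first equality being the normalized hypothesis and the last inequality using $V>0$ with the pointwise bound. Hence the nonnegative integrand vanishes identically, forcing equality in each cross inequality with $\widetilde a_i\widetilde b_j>0$. By the equality discussion of Propositions \ref{key_lemm} and \ref{key_lemm2} only case (i) survives, all principal curvatures equal, once case (ii) is excluded exactly as in the proof of Theorem \ref{Lkcons.} (at the (horo-)elliptic point case (ii) would force $\widetilde L_k\equiv0$, contradicting $\widetilde L_k>0$ there). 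Thus $\S$ is totally umbilical, i.e.\ a centered geodesic hypersphere.

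The special case \emph{$N_k$ constant} needs a separate treatment, since, unlike $L_k$ constant, it does not reduce to the combination form: no $\widetilde N_i$ is itself constant ($\widetilde N_0=H_1$ is not). Instead I argue as in Theorem \ref{Lkcons.} via Heintze--Karcher. Taking $i=0$ in the cross inequality (equivalently (\ref{key inequ.})) gives $\widetilde N_k\le H_1\widetilde L_k$, so $\widetilde L_k\ge\gamma/H_1$, where $\gamma:=\widetilde N_k>0$ is the constant value and $H_1\ge1>0$ by convexity. Combining with (\ref{eq1_Lk}),
\[
\gamma\int_\S\<X,\nu\>d\mu=\int_\S\<X,\nu\>\widetilde N_k\,d\mu=\int_\S V\widetilde L_k\,d\mu\ge\gamma\int_\S\frac{V}{H_1}\,d\mu,
\]
hence $\int_\S\<X,\nu\>d\mu\ge\int_\S V/H_1\,d\mu$, while Proposition \ref{B} gives the reverse inequality. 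Equality in Proposition \ref{B} then yields that $\S$ is totally umbilical, hence a centered geodesic hypersphere.

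The only genuinely new piece of work is confirming the cross inequality $\widetilde L_j\widetilde N_i\ge\widetilde N_j\widetilde L_i$ in the hemisphere at the order required here: one runs the $X_{s,t}$-induction of Lemma \ref{lemm1} and inequality (\ref{eq111}) up to the index carrying $H_{2k+1}$, which is legitimate precisely under $(2k+1)$-convexity (the $L_k$ case only reached $H_{2k}$). I expect this bookkeeping --- matching the convexity order to the top mean curvature $H_{2k+1}$ entering $\widetilde N_k$, and verifying positivity of the relevant combinations at the (horo-)elliptic point --- to be the main, though routine, obstacle; the analytic core is identical to the $L_k$ case.
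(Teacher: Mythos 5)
Your proposal is correct, and for the linear--combination case it is exactly the adaptation the paper has in mind when it declares the proof of Theorem \ref{thmNk} ``similar'' and leaves it to the reader: you read the hypothesis on $\widetilde N$, use the cross inequalities (\ref{LN1}) (resp.\ their hemisphere analogue generated from (\ref{aim00}), now pushed one index higher, which is precisely why $(2k+1)$-convexity and $k\le\frac{n-2}{2}$ appear) to obtain the pointwise bound $\sum_j\widetilde b_j\widetilde L_j\ge\sum_i\widetilde a_i\widetilde L_i$, and close with the Minkowski identity (\ref{eq1_Lk}) in the direction converting $\widetilde N$ into $\widetilde L$; the equality discussion and the exclusion of case (ii) of Propositions \ref{key_lemm} and \ref{key_lemm2} run as in Theorems \ref{Lkcons.}, \ref{thmLk} and \ref{thmLkNk}. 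Where you genuinely add something is the ``in particular, $N_k$ constant'' clause: you correctly observe that, unlike ``$L_k$ constant'' (which is the combination $cL_0=L_k$ because $L_0\equiv 1$), ``$N_k$ constant'' is \emph{not} an instance of the combination hypothesis, since $N_0=H_1$ is non-constant --- a point the paper glosses over --- and your fix (the $i=0$ cross inequality $\widetilde N_k\le H_1\widetilde L_k$, the identity (\ref{eq1_Lk}), and then Brendle's Heintze--Karcher inequality, Proposition \ref{B}, whose equality case gives umbilicity) is the same mechanism the paper deploys in Theorems \ref{mainthm00}(ii) and \ref{mainthm2}(i), so it fits naturally and uses the embeddedness hypothesis where it is actually needed.

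Two small points to tighten, both at the same level of rigor as the paper's own arguments: first, dividing by the common value $\sum_i\widetilde a_i\widetilde N_i=\sum_j\widetilde b_j\widetilde N_j$ requires this value to be positive \emph{pointwise}, not merely at the (horo-)elliptic point; this needs a word (nonnegativity of each $\widetilde N_i$ under the stated convexity, plus the role of the index $i=0$ term). Second, in the hemisphere your parenthetical ``$H_1\ge 1>0$ by convexity'' is not correct as stated: $(2k+1)$-convexity only gives $H_1\ge 0$, and positivity of $H_1$ must instead be extracted from the constancy $N_k=\gamma>0$ together with Newton--Maclaurin, exactly as in the proof of Theorem \ref{mainthm00}(ii); in the horoconvex case $H_1\ge 1$ is of course immediate.
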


We end this paper with a remark.
\begin{RK} By virtue of our main results, Theorems \ref{mainthm0}, \ref{mainthm00}, \ref{thmLk}, \ref{thmLkNk} and \ref{thmNk}, we tend to believe that rigidity holds for hypersurfaces with constant linear combinations of $H_k$, i.e., $$\sum_{i=1}^{n-1} a_k H_k=const.$$ for any $a_k\in \mathbb{R},$ not necessary nonnegative. In fact, Theorem \ref{thmLk}, \ref{thmLkNk} and \ref{thmNk} include a large class of such rigidity results for  linear combinations of $H_k$ with pure even (or odd) indices. However, our method seems not enough to prove the most general version of linear combinations. \end{RK}

\

\noindent{\bf Acknowledgment.}  Both authors would like to thank Professors Guofang Wang, Yuxin Ge and Dr. Wei Wang for helpful discussions.

\


\begin{thebibliography}{abcde}
\bibitem{AIR} L. J. Al\'as, D. Impera and M. Rigoli, {\it Hypersurfaces of constant higher
order mean curvature in warped products,} Trans. Am. Math. Soc. \textbf{365}
(2013), 591-621.
\bibitem{A} A. D. Alexandrov, {\it Uniqueness theorems for surfaces in the large I-V,} Vestnik Leningrad Univ.,
\textbf{11} (1956), 5-17; \textbf{12} (1957), 15-44; \textbf{13} (1958), 14-26; \textbf{13} (1958),
27-34; \textbf{13} (1958), 5-8; English transl. in Am. Math. Soc. Transl. \textbf{21} (1962)
341-354, 354-388, 389-403, 403-411, 412-416.
\bibitem{AL} C. P. Aquino and H. F. de Lima,{\it On the unicity of complete hypersurfaces immersed in a semi-
Riemannian warped product,} J. Geom. Anal. (2012). doi:10.1007/s12220-012-9366-5.
\bibitem{ALM} L. J. Al\'ias, J. H. S. de Lira and J. M. Malacarne, {\it Constant higher-order mean curvature
hypersurfaces in Riemannian spaces,} J. Inst. Math. Jussieu, \textbf{5} (2006), no. 4, 527-562.
\bibitem{Brendle} S. Brendle, {\it Constant mean curvature surfaces in warped product manifolds,} Publ. Math. Inst. Hautes \'Etudes Sci. \textbf{117} (2013), 247-269.
\bibitem{BE}S. Brendle and M. Eichmair, {\it Isoperimetric and Weingarten surfaces in the
Schwarzschild manifold,}  \textbf{94} (2013), no. 94, 387-407.
\bibitem{DGS}  M. Dahl, R. Gicquaud and A. Sakovich, {\it Penrose type inequalities for asymptotically hyperbolic graphs,}   Ann. Henri Poincar\'e \textbf{14} (2013), no. 5, 1135-1168.
\bibitem{Garding} L. G\r{a}rding, {\it An inequality for hyperbolic polynomials,} J. Math. Mech. \textbf{8}, (1959), 957-965.
\bibitem{GWW0} Y. Ge, G. Wang and J. Wu, {\it A new mass for asymptotically flat manifolds,} \textbf{arXiv:1211.3645}.
\bibitem{GWW1}Y. Ge, G. Wang and J. Wu, {\it The Gauss-Bonnet-Chern mass of conformally flat manifolds}, \textbf{arXiv:1212.3213}, to appear in IMRN.
\bibitem{GWW2} Y. Ge, G. Wang and J. Wu, {\it Hyperbolic Alexandrov-Fenchel quermassintegral inequalities II}, \textbf{ arXiv:1304.1417}.
\bibitem{GWW3} Y. Ge, G. Wang and J. Wu, {\it The GBC mass for asymptotically hyperbolic manifolds,} \textbf{ arXiv:1306.4233}.
\bibitem{Guan} P. Guan, {\it Topics in Geometric Fully Nonlinear Equations,} Lecture Notes, http://www.math.mcgill.ca/guan/notes.html.
\bibitem{HLMG} Y. He, H. Li, H. Ma and J. Ge, {\it Compact embedded hypersurfaces with constant higher order anisotropic mean curvatures,}  Indiana Univ. Math. J. \textbf{58} (2009), no. 2, 853-868.
\bibitem{HMZ} O. Hijazi, S. Montiel and X. Zhang, {\it Dirac operator on embedded hypersurfaces}, Math. Res.
Lett., \textbf{8} (2001), 195-208.
\bibitem{HTY} W.-Y. Hsiang, Z.-H. Teng and W.-C. Yu, {\it New examples of constant mean curvature immersions of (2k-1)-spheres into Euclidean 2k-space}, Ann. of Math. (2) \textbf{117} (1983), no. 3, 609-625.
\bibitem{Hs} C. C. Hsiung, {\it Some integral formulas for closed hypersurfaces,} Math.Scand. \textbf{2} (1954), 286-294.
\bibitem{Koh2} S. E. Koh, {\it Sphere theorem by means of the ratio of mean curvature functions,} Glasgow Math. J. \textbf{42}, (2000) 91-95.
\bibitem{Kor} N. J. Korevaar, {\it Sphere theorems via Alexsandrov for constant Weingarten curvature hypersurfaces---Appendix to a note of A. Ros,} J. Diff. Geom. \textbf{27}, (1988), 221-223.
\bibitem{Lan}C.  Lanczos, {\it A remarkable property of the Riemann-Christoffel tensor in four dimensions,}  Ann. of Math. (2) {\bf 39}  (1938),  no. 4, 842--850.
\bibitem{Li} H. Liebmann, {\it Eine neue Eigenschaft der Kugel}, Nachr. Akad. Wiss. G\"ottingen, (1899), 44-55.
\bibitem{Mon} S. Montiel, {\it Unicity of constant mean curvature hypersurfaces in some Riemannian
manifolds,} Indiana Univ. Math. J. \textbf{48}, 711-748 (1999).

\bibitem{Mon2} S. Montiel,  {\it Uniqueness of spacelike hypersurfaces of constant mean curvature in foliated spacetimes,} Math. Ann. \textbf{314} (1999), no. 3, 529-553.
\bibitem{MR} S. Montiel and A. Ros, {\it Compact hypersurfaces: The Alexandrov theorem for
higher order mean curvatures,} Pitman Monographs and Surveys in Pure
and Applied Mathematics \textbf{52} (1991) (in honor of M.P. do Carmo; edited
by B. Lawson and K. Tenenblat), 279-296.
\bibitem{Reilly} R. Reilly, {\it Applications of the Hessian operator in a Riemannian manifold,} Indiana Univ. Math.
J., \textbf{26} (1977), 459-472.
\bibitem{Ros0} A. Ros, {\it Compact hypersurfaces with constant scalar curvature and a congruence theorem,} J. Diff.
Geom., \textbf{27} (1988), 215-220.
\bibitem{Ros} A. Ros, {\it Compact hypersurfaces with constant higher order mean curvatures},   Revista Mathm\'atica
Iberoamericana, \textbf{3} (1987), 447-453.
 \bibitem{Su} W. S\"uss, {\it \"Uber Kennzeichnungen der Kugeln und Affinsph\"aren durch Herrn K.-P. Grotemeyer,} Arch. Math. (Basel) \textbf{3} (1952),  311-313.
\bibitem{Wente} H. C. Wente, {\it Counterexample to a conjecture of H. Hopf,} Pacific J. Math. \textbf{121} (1986), no. 1, 193-243.
\bibitem{W} J. Wu, {\it A new characterization of geodesic spheres in the hyperbolic space}, \textbf{arXiv:1305.2805}, to appear in Proc. Amer. Math. Soc.
\end{thebibliography}
\end{document}